\providecommand{\U}[1]{\protect \rule{.1in}{.1in}}
\newtheorem{theorem}{Theorem}[section]
\newtheorem{definition}[theorem]{Definition}
\newtheorem{example}[theorem]{Example}
\newtheorem{proposition}[theorem]{Proposition}
\newtheorem{remark}[theorem]{Remark}
\newenvironment{proof}[1][Proof]{\noindent \textbf{#1.} }{\  $\Box$}
\numberwithin{equation}{section}
\begin{document}

\title{Relationship between MP and DPP for stochastic recursive optimal control
problem under volatility uncertainty }
\author{Xiaojuan Li\thanks{Zhongtai Securities Institute for Financial Studies,
Shandong University, Jinan 250100, China. lixiaojuan@mail.sdu.edu.cn. Research
supported by NSF (No. 11671231).} }
\maketitle

\textbf{Abstract}. In this paper, we study the relationship between maximum
principle (MP) and dynamic programming principle (DPP) for stochastic
recursive optimal control problem driven by $G$-Brownian motion. Under the
smooth assumption for the value function, we obtain the connection between MP
and DPP under a reference probability $P_{t,x}^{\ast}$. Within the framework
of viscosity solution, we establish the relation between the first-order
super-jet, sub-jet of the value function and the solution to the adjoint
equation respectively.

{\textbf{Key words}. } Stochastic recursive optimal control, Maximum
principle, Dynamic programming principle, $G$-expectation

\textbf{AMS subject classifications.} 93E20, 60H10, 35K15

\addcontentsline{toc}{section}{\hspace*{1.8em}Abstract}

\section{Introduction}

Motivated by the study of volatility uncertainty in finance, Peng \cite{P07a,
P08a} established the theory of $G$-expectation which is a consistent
sublinear expectation. The representation of $G$-expectation as the supremum
of expectations over a set of nondominated probability measures $\mathcal{P}$
was obtained in \cite{DHP11, HP09}. Epstein and Ji \cite{EJ1, EJ2} studied the
volatility uncertainty in economic and financial problems by using
$G$-expectation as a tool. Hu et al. \cite{HJPS1, HJPS} obtained the existence
and uniqueness theorem and other properties for backward stochastic
differential equation driven by $G$-Brownian motion ($G$-BSDE), which is
completely different from the classical BSDE due to the set of nondominated
probability measures $\mathcal{P}$ representing $G$-expectation. In addition,
Soner et al. \cite{STZ11} studied a new type of fully nonlinear BSDE, called
$2$BSDE, by different formulation and method, and obtained the deep result of
the existence and uniqueness theorem for $2$BSDE.

Hu and Ji \cite{HJ0, HJ1} studied the maximum principle (MP) and dynamic
programming principle (DPP) for the following stochastic recursive optimal
control problem driven by $G$-Brownian motion:%
\begin{equation}
\left \{
\begin{array}
[c]{rl}%
dX_{s}^{t,x,u}= & b(s,X_{s}^{t,x,u},u_{s})ds+h_{ij}(s,X_{s}^{t,x,u}%
,u_{s})d\langle B^{i},B^{j}\rangle_{s}+\sigma(s,X_{s}^{t,x,u},u_{s})dB_{s},\\
X_{t}^{t,x,u}= & x,\text{ }s\in \lbrack t,T],
\end{array}
\right.  \label{e1-1}%
\end{equation}%
\begin{equation}
\left \{
\begin{array}
[c]{rl}%
dY_{s}^{t,x,u}= & -f(s,X_{s}^{t,x,u},Y_{s}^{t,x,u},Z_{s}^{t,x,u}%
,u_{s})ds-g_{ij}(s,X_{s}^{t,x,u},Y_{s}^{t,x,u},Z_{s}^{t,x,u},u_{s})d\langle
B^{i},B^{j}\rangle_{s}\\
& +Z_{s}^{t,x,u}dB_{s}+dK_{s}^{t,x,u},\\
Y_{T}^{t,x,u}= & \Phi(X_{T}^{t,x,u}),\text{ }s\in \lbrack t,T],
\end{array}
\right.  \label{ee1-1}%
\end{equation}
where $B=(B^{1},\ldots,B^{d})$ is a $d$-dimensional $G$-Brownian motion,
$(\langle B^{i},B^{j}\rangle)_{ij}$ is the quadratic variation of $B$, the set
of all admissible controls $(u_{s})_{s\in \lbrack t,T]}$ is denoted by
$\mathcal{U}^{t}[t,T]$ (see (\ref{new-e-2-1}) for definition). The value
function is defined as%
\begin{equation}
V(t,x):=\inf_{u\in \mathcal{U}^{t}[t,T]}Y_{t}^{t,x,u}. \label{e1-2}%
\end{equation}
$\bar{u}(\cdot)\in \mathcal{U}^{t}[t,T]$ is called an optimal control if
$V(t,x)=Y_{t}^{t,x,\bar{u}}$. Since%
\[%
\begin{array}
[c]{rl}%
Y_{t}^{t,x,u}= & \underset{P\in \mathcal{P}}{\sup}E_{P}\left[  \Phi
(X_{T}^{t,x,u})+\int_{t}^{T}f(s,X_{s}^{t,x,u},Y_{s}^{t,x,u},Z_{s}%
^{t,x,u},u_{s})ds\right. \\
& \left.  +\int_{t}^{T}g_{ij}(s,X_{s}^{t,x,u},Y_{s}^{t,x,u},Z_{s}%
^{t,x,u},u_{s})d\langle B^{i},B^{j}\rangle_{s}\right]  ,
\end{array}
\]
the value function defined in (\ref{e1-2}) is an $\inf \sup$ problem, which is
known as the robust optimal control problem. By introducing a new weak
convergence method under $G$-expectation $\mathbb{\hat{E}}[\cdot]$, Hu and Ji
\cite{HJ0} obtained the MP for the control problem (\ref{e1-1})-(\ref{e1-2})
under a reference probability $P_{t,x}^{\ast}\in \mathcal{P}_{t,x}^{\ast
}\subset \mathcal{P}$ (see (\ref{e2-3-1}) for definition), and proved that this
MP is also a suficient condition under some convex assumptions. In particular,
the adjoint equation (\ref{e2-4}) is introduced under $P_{t,x}^{\ast}$ and has
a unique solution $(p_{s},q_{s},N_{s})_{s\in \lbrack t,T]}$. By introducing a
new implied partition approach, Hu and Ji \cite{HJ1} obtained the DPP and the
related HJB equation. Let us mention that Biagini et al. \cite{BM} and Xu
\cite{Xu} also studied the MP under $G$-expectation framework by different
formulation and method.

It is well-known that MP and DPP are two important methods to study control
problems. The relationship between MP and DPP is studied in many literatures,
we refer the readers to \cite{BCM, HJX, NSW, Shi, J.Yong, Zhang, Z90} and the
references therein. Zhou \cite{Z90} and Yong, Zhou \cite{J.Yong} obtained the
relationship between MP and DPP for stochastic optimal control problems. The
relationship between MP and DPP for different types of stochastic recursive
optimal control problems was established in \cite{HJX, NSW, Shi, Zhang}.
Bahlali et al. \cite{BCM} obtained the relationship between MP and DPP in
singular stochastic control.

In this paper, we study the relationship between MP and DPP for the control
problem (\ref{e1-1})-(\ref{e1-2}). Up to our knowledge, there is no result on
this topic. If the value function $V(\cdot)\in C^{1,2}([t,T]\times \mathbb{R})$
and $\partial_{xx}^{2}V(\cdot)$ is of polynomial growth in $x$, then, for any
$P_{t,x}^{\ast}\in \mathcal{\tilde{P}}_{t,x}^{\ast}\subset \mathcal{P}%
_{t,x}^{\ast}$ (see (\ref{eee3-1}) for definition), we have
\begin{equation}
Y_{s}^{t,x,\bar{u}}=V(s,X_{s}^{t,x,\bar{u}}),\text{ }Z_{s}^{t,x,\bar{u}%
}=\sigma(s,X_{s}^{t,x,\bar{u}},\bar{u}_{s})\partial_{x}V(s,X_{s}^{t,x,\bar{u}%
}),\text{ a.e. }s\in \lbrack t,T],\text{ }P_{t,x}^{\ast}\text{-a.s.}
\label{e1-3}%
\end{equation}
In Example \ref{exa3}, we prove that the relational expression (\ref{e1-3})
does not hold for any $P\not \in \mathcal{\tilde{P}}_{t,x}^{\ast}$. If the
value function $V(\cdot)\in C^{1,3}([t,T]\times \mathbb{R})$, $\partial
_{xx}^{2}V(\cdot)$ is of polynomial growth in $x$ and $\partial_{sx}%
^{2}V(\cdot)$ is continuous, then we obtain that, for some $P_{t,x}^{\ast}%
\in \mathcal{\tilde{P}}_{t,x}^{\ast}$,%
\[
p_{s}=\partial_{x}V(s,X_{s}^{t,x,\bar{u}}),\text{ }q_{s}=\sigma(s,X_{s}%
^{t,x,\bar{u}},\bar{u}_{s})\partial_{xx}^{2}V(s,X_{s}^{t,x,\bar{u}}),\text{
}N_{s}=0,\text{ a.e. }s\in \lbrack t,T],\text{ }P_{t,x}^{\ast}\text{-a.s.,}%
\]
and the MP in \cite{HJ0} holds, where $P_{t,x}^{\ast}$ satisfies the condition
(\ref{eee3-4}). Thus, the relational expression (\ref{e1-3}) explains why the
MP holds under a reference probability $P_{t,x}^{\ast}$. Furthermore, we study
the relationship between MP and DPP in the viscosity sense and obtain%
\begin{equation}
D_{x}^{1,-}V(t,x)\subseteq \lbrack \tilde{p}_{t},\bar{p}_{t}],\text{ }\bar
{p}_{t}\in D_{x}^{1,+}V(t,x)\text{ if }\bar{p}_{t}=\tilde{p}_{t}, \label{e1-4}%
\end{equation}
where $\tilde{p}_{t}$ and $\bar{p}_{t}$ are defined in Theorem \ref{MP-DPP-2}.
In particular, $D_{x}^{1,+}V(t,x)$ may be empty in Example \ref{exa1}. The
relational expression (\ref{e1-4}) is completely new and different from the
classical case.

This paper is organized as follows. We recall some basic results of
$G$-expectation, the MP and DPP for stochastic recursive optimal control
problem driven by $G$-Brownian motion in Section 2. In Section 3, we obtain
the relationship between MP and DPP for stochastic recursive optimal control
problem driven by $G$-Brownian motion.

\section{Preliminaries}

\subsection{$G$-expectation}

In this subsection, we recall some basic notions and results of $G$%
-expectation. The readers may refer to \cite{P2019} for more details.

Let $T>0$ be fixed and let $\Omega_{T}=C_{0}([0,T];\mathbb{R}^{d})$ be the
space of $\mathbb{R}^{d}$-valued continuous functions on $[0,T]$ with
$\omega_{0}=0$. The canonical process $B_{t}(\omega):=\omega_{t}$, for
$\omega \in \Omega_{T}$ and $t\in \lbrack0,T]$. For each given $t\in \lbrack0,T]$,
set%
\[
Lip(\Omega_{t}):=\{ \varphi(B_{t_{1}},B_{t_{2}}-B_{t_{1}},\ldots,B_{t_{N}%
}-B_{t_{N-1}}):N\geq1,t_{1}\leq \cdots \leq t_{N}\leq t,\varphi \in
C_{b.Lip}(\mathbb{R}^{d\times N})\},
\]
where $C_{b.Lip}(\mathbb{R}^{d\times N})$ denotes the space of bounded
Lipschitz functions on $\mathbb{R}^{d\times N}$.

Let $G:\mathbb{S}_{d}\rightarrow \mathbb{R}$ be a given monotonic and sublinear
function, where $\mathbb{S}_{d}$ denotes the set of $d\times d$ symmetric
matrices. Then there exists a bounded and convex set $\Sigma \subset
\mathbb{S}_{d}^{+}$ such that%
\[
G(A)=\frac{1}{2}\sup_{\gamma \in \Sigma}\mathrm{tr}[A\gamma]\text{ for }%
A\in \mathbb{S}_{d},
\]
where $\mathbb{S}_{d}^{+}$ denotes the set of $d\times d$ nonnegative
matrices. In this paper, we consider the non-degenerate $G$, i.e., there
exists a $\alpha>0$ such that $\gamma \geq \alpha I_{d}$ for any $\gamma
\in \Sigma$. Specially, if $d=1$, then $G(a)=\frac{1}{2}(\bar{\sigma}^{2}%
a^{+}-\underline{\sigma}^{2}a^{-})$ for $a\in \mathbb{R}$ with $\bar{\sigma
}\geq \underline{\sigma}>0$.

Peng \cite{P07a, P08a} constructed the $G$-expectation $\mathbb{\hat{E}%
}:Lip(\Omega_{T})\rightarrow \mathbb{R}$ and the conditional $G$-expectation
$\mathbb{\hat{E}}_{t}:Lip(\Omega_{T})\rightarrow Lip(\Omega_{t})$ as follows:

\begin{description}
\item[(i)] For $s\leq t\leq T$ and $\varphi \in C_{b.Lip}(\mathbb{R}^{d})$,
define $\mathbb{\hat{E}}[\varphi(B_{t}-B_{s})]=u(t-s,0)$, where $u$ is the
viscosity solution (see \cite{CIP}) of the following $G$-heat equation:%
\[
\partial_{t}u-G(\partial_{xx}^{2}u)=0,\ u(0,x)=\varphi(x).
\]

\item[(ii)] For $X=\varphi(B_{t_{1}},B_{t_{2}}-B_{t_{1}},\ldots,B_{t_{N}%
}-B_{t_{N-1}})\in Lip(\Omega_{T})$, define
\[
\mathbb{\hat{E}}_{t_{i}}[X]=\varphi_{i}(B_{t_{1}},\ldots,B_{t_{i}}-B_{t_{i-1}%
})\text{ for }i=N-1,\ldots,1\text{ and }\mathbb{\hat{E}}[X]=\mathbb{\hat{E}%
}[\varphi_{1}(B_{t_{1}})],
\]
where $\varphi_{N-1}(x_{1},\ldots,x_{N-1}):=\mathbb{\hat{E}}[\varphi
(x_{1},\ldots,x_{N-1},B_{t_{N}}-B_{t_{N-1}})]$ and
\[
\varphi_{i}(x_{1},\ldots,x_{i}):=\mathbb{\hat{E}}[\varphi_{i+1}(x_{1}%
,\ldots,x_{i},B_{t_{i+1}}-B_{t_{i}})]\text{ for }i=N-2,\ldots,1.
\]

\end{description}

The space $(\Omega_{T},Lip(\Omega_{T}),\mathbb{\hat{E}},(\mathbb{\hat{E}}%
_{t})_{t\in \lbrack0,T]})$ is a consistent sublinear expectation space, where
$\mathbb{\hat{E}}_{0}=\mathbb{\hat{E}}$. The canonical process $(B_{t}%
)_{t\in \lbrack0,T]}$ is called the $G$-Brownian motion under $\mathbb{\hat{E}%
}$.

For each $t\in \lbrack0,T]$, denote by $L_{G}^{p}(\Omega_{t})$ the completion
of $Lip(\Omega_{t})$ under the norm $||X||_{L_{G}^{p}}:=(\mathbb{\hat{E}%
}[|X|^{p}])^{1/p}$ for $p\geq1$. $\mathbb{\hat{E}}_{t}$ can be continuously
extended to $L_{G}^{1}(\Omega_{T})$ under the norm $||\cdot||_{L_{G}^{1}}$. A
process $(M_{t})_{t\in \lbrack0,T]}$ is called a $G$-martingale if $M_{T}\in
L_{G}^{1}(\Omega_{T})$ and $M_{t}=\mathbb{\hat{E}}_{t}[M_{T}]$.

\begin{theorem}
\label{re-1}(\cite{DHP11, HP09}) There exists a unique convex and weakly
compact set of probability measures $\mathcal{P}$ on $(\Omega_{T}%
,\mathcal{B}(\Omega_{T}))$ such that%
\[
\mathbb{\hat{E}}[X]=\sup_{P\in \mathcal{P}}E_{P}[X]\text{ for all }X\in
L_{G}^{1}(\Omega_{T}).
\]
$\mathcal{P}$ is called a set that represents $\mathbb{\hat{E}}$.
\end{theorem}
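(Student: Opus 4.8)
The plan is to exhibit $\mathcal{P}$ explicitly as a family of laws of stochastic integrals, to verify the representation first on the cylinder functions $Lip(\Omega_{T})$ and then to propagate it to $L_{G}^{1}(\Omega_{T})$ by density; the genuinely hard point will be the measure-theoretic upgrade from the finitely additive functionals that a Hahn--Banach argument naturally produces to bona fide (countably additive, tight) probability measures, together with the weak compactness of the resulting set. I would first recall that the one-step building block $\mathbb{\hat{E}}[\varphi(B_{t}-B_{s})]=u(t-s,0)$ is the value of the convex HJB equation $\partial_{t}u-G(\partial_{xx}^{2}u)=0$. Since $G(A)=\tfrac{1}{2}\sup_{\gamma\in\Sigma}\mathrm{tr}[A\gamma]$ with $\Sigma\subset\mathbb{S}_{d}^{+}$ bounded and (by non-degeneracy) uniformly elliptic, this is exactly the dynamic programming equation of a drift-free controlled diffusion, so its viscosity solution admits the control representation
\[
u(t,x)=\sup_{\gamma}E_{P_{0}}\Big[\varphi\Big(x+\int_{0}^{t}\gamma_{r}^{1/2}\,dW_{r}\Big)\Big],
\]
where $(W_{r})$ is a standard $d$-dimensional Brownian motion on a reference space $(\Omega,\mathcal{F},P_{0})$ and $\gamma$ ranges over adapted $\Sigma$-valued processes.

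I would then let $\mathcal{P}_{0}$ be the set of laws on $\Omega_{T}$ of the martingales $M_{\cdot}^{\gamma}=\int_{0}^{\cdot}\gamma_{r}^{1/2}\,dW_{r}$, and use the nonlinear Feynman--Kac iteration of part (ii) together with a concatenation of controls across the partition $t_{1}\leq\cdots\leq t_{N}$ to upgrade the one-step identity to $\mathbb{\hat{E}}[X]=\sup_{P\in\mathcal{P}_{0}}E_{P}[X]$ for every cylinder $X\in Lip(\Omega_{T})$. Next I would establish weak compactness: the uniform bound on $\Sigma$ controls the quadratic variation of $M^{\gamma}$, hence by Burkholder--Davis--Gundy it gives Kolmogorov-type moment estimates $E_{P_{0}}[|M_{t}^{\gamma}-M_{s}^{\gamma}|^{p}]\leq C_{p}|t-s|^{p/2}$ that are uniform in $\gamma$. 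This yields tightness of $\mathcal{P}_{0}$ on the Polish space $\Omega_{T}=C_{0}([0,T];\mathbb{R}^{d})$, so by Prokhorov's theorem its closed convex hull $\mathcal{P}:=\overline{\mathrm{conv}}\,\mathcal{P}_{0}$ is convex and weakly compact; since $P\mapsto E_{P}[X]$ is affine, passing to the convex closure leaves the supremum unchanged, so the cylinder-function identity persists with $\mathcal{P}$ in place of $\mathcal{P}_{0}$.

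To reach $L_{G}^{1}(\Omega_{T})$ I would observe that $X\mapsto\sup_{P\in\mathcal{P}}E_{P}[X]$ is $1$-Lipschitz for $\|\cdot\|_{L_{G}^{1}}$ (a supremum of contractions) and agrees with $\mathbb{\hat{E}}$ on the dense subspace $Lip(\Omega_{T})$, whence the two functionals coincide on the whole completion. For uniqueness: if $\mathcal{P}$ and $\mathcal{P}'$ are two convex, weakly compact sets representing $\mathbb{\hat{E}}$, then the support functionals $X\mapsto\sup_{P\in\mathcal{P}}E_{P}[X]$ and $X\mapsto\sup_{P\in\mathcal{P}'}E_{P}[X]$ coincide on $C_{b}(\Omega_{T})$; since a convex weakly compact set of probability measures is determined by its support functional (a Hahn--Banach separation argument in the space of finite signed measures under the weak topology), we conclude $\mathcal{P}=\mathcal{P}'$.

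I expect the main obstacle to be the constructive step identifying $\mathcal{P}_{0}$, and in particular the concatenation of the conditional controls: matching the nested structure of the conditional $G$-expectations $\mathbb{\hat{E}}_{t_{i}}$ to a single global adapted control requires a careful measurable selection together with a stability estimate showing that pasting controls on adjacent intervals preserves both adaptedness and the $\Sigma$-constraint. The purely abstract alternative --- producing dominated linear functionals by Hahn--Banach and upgrading them to tight Radon measures via a Daniell--Stone/Riesz argument --- concentrates the same difficulty in establishing countable additivity, which is forced precisely by the path-regularity (tightness) of $\mathbb{\hat{E}}$ obtained from the Kolmogorov estimates above.
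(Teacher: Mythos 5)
The paper itself gives no proof of this theorem: it is recalled verbatim from \cite{DHP11, HP09}, and your outline reconstructs essentially the argument of those references --- the stochastic-control representation of the $G$-heat equation, concatenation of controls through the nested conditional expectations to get the identity on $Lip(\Omega_{T})$, the family of laws of $\int_{0}^{\cdot}\gamma_{r}^{1/2}\,dW_{r}$ made weakly compact via uniform Kolmogorov moment estimates and Prokhorov, extension to $L_{G}^{1}(\Omega_{T})$ by the $1$-Lipschitz/density argument, and uniqueness by Hahn--Banach separation of convex weakly compact sets of measures. So your proposal is correct in structure and takes essentially the same route as the proofs the paper cites.
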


For this $\mathcal{P}$, we define capacity%
\[
c(A):=\sup_{P\in \mathcal{P}}P(A)\text{ for }A\in \mathcal{B}(\Omega_{T}).
\]
A set $A\in \mathcal{B}(\Omega_{T})$ is polar if $c(A)=0$. A property holds
\textquotedblleft quasi-surely" (q.s. for short) if it holds outside a polar
set. In the following, we do not distinguish two random variables $X$ and $Y$
if $X=Y$ q.s.

\begin{definition}
Let $M_{G}^{0}(0,T)$ be the space of simple processes in the following form:
for each $N\in \mathbb{N}$ and $0=t_{0}<\cdots<t_{N}=T$,%
\[
\eta_{t}=\sum_{i=0}^{N-1}\xi_{i}I_{[t_{i},t_{i+1})}(t),
\]
where $\xi_{i}\in Lip(\Omega_{t_{i}})$ for $i=0,1,\ldots,N-1$.
\end{definition}

Denote by $M_{G}^{p}(0,T)$ the completion of $M_{G}^{0}(0,T)$ under the norm
$||\eta||_{M_{G}^{p}}:=\left(  \mathbb{\hat{E}}[\int_{0}^{T}|\eta_{t}%
|^{p}dt]\right)  ^{1/p}$ for $p\geq1$. For each $\eta^{i}\in M_{G}^{2}(0,T)$,
$i=1,\ldots,d$, denote $\eta=(\eta^{1},\ldots,\eta^{d})^{T}\in M_{G}%
^{2}(0,T;\mathbb{R}^{d})$, the $G$-It\^{o} integral $\int_{0}^{T}\eta_{t}%
^{T}dB_{t}$ is well defined.

\subsection{MP and DPP under volatility uncertainty}

For each given $t\leq s\leq T$, set%
\[
Lip(\Omega_{s}^{t}):=\{ \varphi(B_{t_{1}}-B_{t},\ldots,B_{t_{N}}-B_{t}%
):N\geq1,t\leq t_{1}\leq \cdots \leq t_{N}\leq s,\varphi \in C_{b.Lip}%
(\mathbb{R}^{d\times N})\},
\]%
\[
M_{G}^{0,t}(t,T)=\left \{  \eta_{s}=\sum_{i=0}^{N-1}\xi_{i}I_{[t_{i},t_{i+1}%
)}(s):N\geq1,t=t_{0}\leq \cdots \leq t_{N}=T,\xi_{i}\in Lip(\Omega_{t_{i}}%
^{t})\right \}  .
\]
Denote by $M_{G}^{p,t}(t,T)$ (resp. $L_{G}^{p}(\Omega_{s}^{t})$) the
completion of $M_{G}^{0,t}(t,T)$ (resp. $Lip(\Omega_{s}^{t})$) under the norm
$||\eta||_{M_{G}^{p}}:=\left(  \mathbb{\hat{E}}[\int_{t}^{T}|\eta_{s}%
|^{p}ds]\right)  ^{1/p}$ (resp. $||X||_{L_{G}^{p}}:=(\mathbb{\hat{E}}%
[|X|^{p}])^{1/p}$) for $p\geq1$.

For simplicity of presentation, we only consider the relationship between MP
and DPP under $1$-dimensional case studied in \cite{HJ0, HJ1}. The results
still hold for the general case (\ref{e1-1})-(\ref{e1-2}). More precisely, let
$U\subset \mathbb{R}^{m}$ be a given nonempty convex and compact set, consider
the following $1$-dimensional forward and backward SDEs driven by
$1$-dimensional $G$-Brownian motion $B$ for each $(t,x)\in \lbrack
0,T]\times \mathbb{R}$:%
\begin{equation}
\left \{
\begin{array}
[c]{rl}%
dX_{s}^{t,x,u}= & h(s,X_{s}^{t,x,u},u_{s})d\langle B\rangle_{s}+\sigma
(s,X_{s}^{t,x,u},u_{s})dB_{s},\\
dY_{s}^{t,x,u}= & -g(s,X_{s}^{t,x,u},Y_{s}^{t,x,u},Z_{s}^{t,x,u}%
,u_{s})d\langle B\rangle_{s}+Z_{s}^{t,x,u}dB_{s}+dK_{s}^{t,x,u},\\
X_{t}^{t,x,u}= & x,\text{ }Y_{T}^{t,x,u}=\Phi(X_{T}^{t,x,u}),\text{ }%
s\in \lbrack t,T],
\end{array}
\right.  \label{e2-1}%
\end{equation}
where $\langle B\rangle$ is the quadratic variation of $B$, $h$,
$\sigma:[0,T]\times \mathbb{R}\times U\rightarrow \mathbb{R}$, $g:[0,T]\times
\mathbb{R}\times \mathbb{R}\times \mathbb{R}\times U\rightarrow \mathbb{R}$,
$\Phi:\mathbb{R}\rightarrow \mathbb{R}$ are deterministic and continuous
functions satisfying the following conditions:

\begin{description}
\item[(H1)] The derivatives of $h$, $\sigma$, $g$, $\Phi$ in $(x,y,z,u)$ are
continuous in $(s,x,y,z,u)$.

\item[(H2)] There exists a constant $L>0$ such that for any $(s,x,y,z,u)\in
\lbrack0,T]\times \mathbb{R}\times \mathbb{R}\times \mathbb{R}\times U$,%
\[%
\begin{array}
[c]{l}%
|h_{x}(s,x,u)|+|h_{u}(s,x,u)|+|\sigma_{x}(s,x,u)|+|\sigma_{u}(s,x,u)|+|g_{y}%
(s,x,y,z,u)|+|g_{z}(s,x,y,z,u)|\leq L,\\
|g_{x}(s,x,y,z,u)|+|g_{u}(s,x,y,z,u)|+|\Phi^{\prime}(x)|\leq L(1+|x|+|u|).
\end{array}
\]

\end{description}

\begin{remark}
Here $G(a)=\frac{1}{2}(\bar{\sigma}^{2}a^{+}-\underline{\sigma}^{2}a^{-})$ for
$a\in \mathbb{R}$ with $\bar{\sigma}\geq \underline{\sigma}>0$. By Corollary
3.5.5 in \cite{P2019}, we know $d\langle B\rangle_{s}=\gamma_{s}ds$ with
$\underline{\sigma}^{2}\leq \gamma_{s}\leq \bar{\sigma}^{2}$.
\end{remark}

For each $t\in \lbrack0,T]$, we denote by
\begin{equation}
\mathcal{U}^{t}[t,T]:=\{u(\cdot):u(\cdot)\in M_{G}^{2,t}(t,T;\mathbb{R}%
^{m})\text{ with values in }U\} \label{new-e-2-1}%
\end{equation}
the set of admissible controls on $[t,T]$. For each $u(\cdot)\in
\mathcal{U}^{t}[t,T]$, the equation (\ref{e2-1}) has a unique solution
$(X^{t,x,u},Y^{t,x,u},Z^{t,x,u},K^{t,x,u})$ (see \cite{HJPS1}) such that%
\begin{equation}
\mathbb{\hat{E}}\left[  \sup_{s\in \lbrack t,T]}(|X_{s}^{t,x,u}|^{p}%
+|Y_{s}^{t,x,u}|^{p})+\left(  \int_{t}^{T}|Z_{s}^{t,x,u}|^{2}ds\right)
^{p/2}+|K_{T}^{t,x,u}|^{p}\right]  <\infty \text{ for each }p\geq2
\label{ee2-1}%
\end{equation}
and%
\[
X_{s}^{t,x,u},\text{ }Y_{s}^{t,x,u},\text{ }K_{s}^{t,x,u}\in L_{G}^{p}%
(\Omega_{s}^{t}),\text{ }K^{t,x,u}\text{ is a non-increasing }%
G\text{-martingale with }K_{t}^{t,x,u}=0.
\]
Then $Y_{t}^{t,x,u}\in L_{G}^{p}(\Omega_{t}^{t})=\mathbb{R}$. For each fixed
$(t,x)\in \lbrack0,T]\times \mathbb{R}$, we define the cost functional
\begin{equation}
J(t,x;u(\cdot))=Y_{t}^{t,x,u} \label{e2-2}%
\end{equation}
and the value function%
\begin{equation}
V(t,x)=\inf_{u\in \mathcal{U}^{t}[t,T]}J(t,x;u(\cdot)). \label{e2-3}%
\end{equation}
Let $\bar{u}(\cdot)\in \mathcal{U}^{t}[t,T]$ be an optimal control. Then
$V(t,x)=J(t,x;\bar{u}(\cdot))$. The corresponding solution $(X^{t,x,\bar{u}%
},Y^{t,x,\bar{u}},Z^{t,x,\bar{u}},K^{t,x,\bar{u}})$ to equation (\ref{e2-1})
is called the optimal trajectory. Set%
\begin{equation}
\mathcal{P}_{t,x}^{\ast}=\{P\in \mathcal{P}:E_{P}[K_{T}^{t,x,\bar{u}}]=0\}.
\label{e2-3-1}%
\end{equation}

\begin{remark}
Note that $K_{T}^{t,x,\bar{u}}\leq0$, then $E_{P}[K_{T}^{t,x,\bar{u}}]=0$ iff
$K_{T}^{t,x,\bar{u}}=0$ under $P$.
\end{remark}

For each fixed $P_{t,x}^{\ast}\in \mathcal{P}_{t,x}^{\ast}$, the following
adjoint equation%
\begin{equation}
\left \{
\begin{array}
[c]{rl}%
dp_{s}= & -\{[h_{x}(s)+g_{y}(s)+g_{z}(s)\sigma_{x}(s)]p_{s}+[g_{z}%
(s)+\sigma_{x}(s)]q_{s}+g_{x}(s)\}d\langle B\rangle_{s}\\
& +q_{s}dB_{s}+dN_{s},\\
p_{T}= & \Phi^{\prime}(X_{T}^{t,x,\bar{u}}),\text{ }s\in \lbrack t,T],
\end{array}
\right.  \label{e2-4}%
\end{equation}
has a unique solution $(p(\cdot),q(\cdot),N(\cdot))\in M_{P_{t,x}^{\ast}%
}^{2,t}(t,T)\times M_{P_{t,x}^{\ast}}^{2,t}(t,T)\times M_{P_{t,x}^{\ast}%
}^{2,t,\bot}(t,T)$ under $P_{t,x}^{\ast}$ (see \cite{BL, EH}), where
$h_{x}(s)=h_{x}(s,X_{s}^{t,x,\bar{u}},\bar{u}_{s})$, similar for $\sigma
_{x}(s)$, $g_{x}(s)$, $g_{y}(s)$ and $g_{z}(s)$, $\mathcal{F}_{s}^{t}%
=\sigma(B_{r}-B_{t}:t\leq r\leq s)$,
\[
M_{P_{t,x}^{\ast}}^{2,t}(t,T)=\left \{  (\eta_{s})_{s\in \lbrack t,T]}:\eta
_{s}\in \mathcal{F}_{s}^{t}\text{ and }E_{P_{t,x}^{\ast}}\left[  \int_{t}%
^{T}|\eta_{s}|^{2}ds\right]  <\infty \right \}  ,
\]%
\[%
\begin{array}
[c]{cl}%
M_{P_{t,x}^{\ast}}^{2,t,\bot}(t,T)= & \left \{  (N_{s})_{s\in \lbrack
t,T]}:N_{t}=0\text{, }N_{s}\in \mathcal{F}_{s}^{t}\text{, }N\text{ is a
square}\right. \\
& \left.  \text{integrable martingale that is orthogonal to }B\right \}  .
\end{array}
\]
Define the Hamiltonian $H:\mathbb{R}\times \mathbb{R}\times \mathbb{R}\times
U\times U\times \mathbb{R}\times \mathbb{R}\times \lbrack t,T]\rightarrow
\mathbb{R}$ as follows:%
\begin{equation}
H(x,y,z,u,v,p,q,s)=h(s,x,u)p+\sigma(s,x,u)q+g_{z}(s,x,y,z,v)\sigma
(s,x,u)p+g(s,x,y,z,u). \label{e2-5}%
\end{equation}
Hu and Ji \cite{HJ0} obtained the following MP.

\begin{theorem}
\label{MP}(\cite{HJ0}) Suppose that Assumptions (H1) and (H2) hold. Let
$\bar{u}(\cdot)\in \mathcal{U}^{t}[t,T]$ be an optimal control for the control
problem (\ref{e2-3}) and let $(X^{t,x,\bar{u}},Y^{t,x,\bar{u}},Z^{t,x,\bar{u}%
},K^{t,x,\bar{u}})$ be the corresponding trajectory. Then there exists a
$P_{t,x}^{\ast}\in \mathcal{P}_{t,x}^{\ast}$ such that the following maximum
principle holds:%
\[
\langle H_{u}(X_{s}^{t,x,\bar{u}},Y_{s}^{t,x,\bar{u}},Z_{s}^{t,x,\bar{u}}%
,\bar{u}_{s},\bar{u}_{s},p_{s},q_{s},s),u-\bar{u}_{s}\rangle \geq0\text{,
}\forall u\in U\text{, a.e. }s\in \lbrack t,T]\text{, }P_{t,x}^{\ast
}\text{-a.s.,}%
\]
where $(p(\cdot),q(\cdot),N(\cdot))$ is the solution of the adjoint equation
(\ref{e2-4}) under $P_{t,x}^{\ast}$, $H(\cdot)$ is defined in (\ref{e2-5}).
\end{theorem}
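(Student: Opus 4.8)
The plan is to combine a convex perturbation of the optimal control with a duality argument through the adjoint equation (\ref{e2-4}), the essential $G$-phenomenon being that the reference probability $P_{t,x}^{\ast}$ is forced upon us by the nonincreasing $G$-martingale $K^{t,x,\bar{u}}$. Since $U$ is convex, for an arbitrary $u(\cdot)\in\mathcal{U}^{t}[t,T]$ and $\rho\in(0,1]$ I would set $u_{s}^{\rho}=\bar{u}_{s}+\rho(u_{s}-\bar{u}_{s})$, which again takes values in $U$, and write $(X^{\rho},Y^{\rho},Z^{\rho},K^{\rho})$ for the associated state. First I would linearise the forward equation in (\ref{e2-1}): using (H1)--(H2) together with the a priori estimate (\ref{ee2-1}), the variation $\tilde{X}_{s}=\lim_{\rho\to0^{+}}\rho^{-1}(X_{s}^{\rho}-X_{s}^{t,x,\bar{u}})$ solves the linear $G$-SDE
\[
d\tilde{X}_{s}=[h_{x}(s)\tilde{X}_{s}+h_{u}(s)(u_{s}-\bar{u}_{s})]\,d\langle B\rangle_{s}+[\sigma_{x}(s)\tilde{X}_{s}+\sigma_{u}(s)(u_{s}-\bar{u}_{s})]\,dB_{s},\quad \tilde{X}_{t}=0,
\]
and one verifies $\mathbb{\hat{E}}[\sup_{s}|X_{s}^{\rho}-X_{s}^{t,x,\bar{u}}-\rho\tilde{X}_{s}|^{2}]=o(\rho^{2})$ by Gronwall- and BDG-type estimates, which remain valid under $\mathbb{\hat{E}}[\cdot]$.

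Next I would propagate the variation through the $G$-BSDE, obtaining a linear $G$-BSDE for $\tilde{Y}$ with terminal value $\Phi^{\prime}(X_{T}^{t,x,\bar{u}})\tilde{X}_{T}$ and a driver built from $g_{x},g_{y},g_{z},g_{u}$. Optimality of $\bar{u}$ gives $Y_{t}^{\rho}=J(t,x;u^{\rho})\geq V(t,x)=Y_{t}^{t,x,\bar{u}}$, hence $\rho^{-1}(Y_{t}^{\rho}-Y_{t}^{t,x,\bar{u}})\geq0$. The structural point is the representation $Y_{t}^{t,x,u}=\sup_{P\in\mathcal{P}}Y_{t}^{P,t,x,u}$, where $Y^{P}$ solves the classical BSDE under $P$; along $\bar{u}$ this supremum is attained exactly on $\mathcal{P}_{t,x}^{\ast}=\{P:E_{P}[K_{T}^{t,x,\bar{u}}]=0\}$, so the linearised cost must be read against a measure in this set.

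The decisive computation is carried out under a fixed $P_{t,x}^{\ast}\in\mathcal{P}_{t,x}^{\ast}$. There $K_{T}^{t,x,\bar{u}}=0$ $P_{t,x}^{\ast}$-a.s. (as $K^{t,x,\bar{u}}$ is nonincreasing with $K_{t}^{t,x,\bar{u}}=0$), so the $G$-BSDE collapses to a classical BSDE and the adjoint equation (\ref{e2-4}) has its solution $(p,q,N)$ in $M_{P_{t,x}^{\ast}}^{2,t}(t,T)\times M_{P_{t,x}^{\ast}}^{2,t}(t,T)\times M_{P_{t,x}^{\ast}}^{2,t,\bot}(t,T)$. I would apply It\^{o}'s formula under $P_{t,x}^{\ast}$ to $p_{s}\tilde{X}_{s}$: the drift of (\ref{e2-4}) is engineered so that all $d\langle B\rangle_{s}$-terms carrying $h_{x},\sigma_{x},g_{x},g_{y},g_{z}$ cancel against those of the variational equations, the $g_{z}\tilde{Z}$ contribution reproduces the cross term $g_{z}\sigma_{u}p$ in the Hamiltonian, and the orthogonal martingale $N$ drops out in $E_{P_{t,x}^{\ast}}$-expectation. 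What remains is
\[
\tilde{Y}_{t}=E_{P_{t,x}^{\ast}}\!\left[\int_{t}^{T}\langle H_{u}(X_{s}^{t,x,\bar{u}},Y_{s}^{t,x,\bar{u}},Z_{s}^{t,x,\bar{u}},\bar{u}_{s},\bar{u}_{s},p_{s},q_{s},s),\,u_{s}-\bar{u}_{s}\rangle\,d\langle B\rangle_{s}\right].
\]
Since $d\langle B\rangle_{s}=\gamma_{s}\,ds$ with $\underline{\sigma}^{2}\leq\gamma_{s}\leq\bar{\sigma}^{2}$, the weight is equivalent to $ds$, so from $\tilde{Y}_{t}\geq0$ for every admissible $u$ a standard localisation (testing with $u_{s}=\bar{u}_{s}+\varepsilon\mathbf{1}_{A}(v-\bar{u}_{s})$, $v\in U$) yields the pointwise inequality $\langle H_{u},u-\bar{u}_{s}\rangle\geq0$ for all $u\in U$, a.e.\ $s$, $P_{t,x}^{\ast}$-a.s.

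The hard part will be the existence of the correct $P_{t,x}^{\ast}$ together with the passage $\rho\to0^{+}$ under the nondominated family $\mathcal{P}$. Because $\mathcal{P}$ is not dominated, one cannot differentiate $\sup_{P}Y_{t}^{P,t,x,u}$ by dominated convergence, and the maximising measure $P^{\rho}$ for $u^{\rho}$ genuinely depends on $\rho$. Writing $Y_{t}^{\rho}-Y_{t}^{t,x,\bar{u}}\leq Y_{t}^{P^{\rho},t,x,u^{\rho}}-Y_{t}^{P^{\rho},t,x,\bar{u}}$ (using that $P^{\rho}$ maximises $u^{\rho}$ while $Y_{t}^{t,x,\bar{u}}\geq Y_{t}^{P^{\rho},t,x,\bar{u}}$ for every $\rho$) reduces the one-sided estimate to a difference of classical BSDE values under the single measure $P^{\rho}$; the weak convergence method under $\mathbb{\hat{E}}[\cdot]$ then extracts, by the weak compactness of $\mathcal{P}$ (Theorem \ref{re-1}), a subsequential limit $P^{\rho_{n}}\to P_{t,x}^{\ast}$. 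Identifying this limit as an element of $\mathcal{P}_{t,x}^{\ast}$ (so that it simultaneously annihilates $K_{T}^{t,x,\bar{u}}$ and drives the first-order inequality), and controlling the convergence of the random density $\gamma_{s}$ of $\langle B\rangle$ along the subsequence, is the technical crux that separates this $G$-framework from the classical stochastic maximum principle.
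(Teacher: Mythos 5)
First, a point of reference: this paper does not prove Theorem \ref{MP} at all --- it is quoted from \cite{HJ0} --- so the only in-paper benchmark is the weak-convergence technique used in Step 5 of the proof of Theorem \ref{MP-DPP-2}. Your outline does follow the strategy attributed to \cite{HJ0} (convex perturbation, variational equations, duality with the adjoint equation under a single measure, extraction of a reference measure by weak compactness of $\mathcal{P}$), but there is a genuine gap at exactly the point you call the crux, and it concerns the order of quantifiers. The theorem asserts the existence of \emph{one} $P_{t,x}^{\ast}$ under which the pointwise inequality holds for \emph{all} $u\in U$. Your construction produces, for each fixed perturbation direction $u(\cdot)$, a subsequential weak limit $P^{\ast}$ of the maximizers $P^{\rho_{n}}$ --- a measure that depends on $u(\cdot)$ and on the subsequence. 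Your final localization step (testing with $u_{s}=\bar{u}_{s}+\varepsilon\mathbf{1}_{A}(v-\bar{u}_{s})$) then varies the test control while keeping the measure fixed; in your scheme, however, changing the control changes the measure, so this step is circular. To close the gap one must first produce a single measure valid for all directions, e.g.\ by rewriting the linearized cost as $E_{P}[\zeta(u)]$ where $\zeta(u)\in L_{G}^{1}(\Omega_{T})$ does \emph{not} depend on $P$ (possible because the variational BSDE is linear and so admits an explicit exponential representation, as in the formula for $p_{t}^{P^{x_{i}^{n}}}$ in this paper), and then exploiting convexity and weak compactness of $\mathcal{P}_{t,x}^{\ast}$ together with weak continuity of $P\mapsto E_{P}[\zeta(u)]$ to obtain one $P^{\ast}$ with $E_{P^{\ast}}[\zeta(u)]\geq0$ simultaneously for all admissible $u(\cdot)$; only then can the localization be run under a fixed measure. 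Your sketch contains no substitute for this step.

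A second, related defect is the mechanism for the limit passage itself. You need the first-order terms to converge along $P^{\rho_{n}}\rightarrow P^{\ast}$, and you propose to do this by ``controlling the convergence of the random density $\gamma_{s}$ of $\langle B\rangle$ along the subsequence.'' That approach would fail: weak convergence of probability measures gives no control on the quadratic-variation densities, which are not continuous functionals of the path. The device that works --- and the one this paper itself uses in Step 5 of Theorem \ref{MP-DPP-2}, both in (\ref{ee3-24})--(\ref{ee3-25}) to show $P^{\ast}\in\mathcal{P}_{t,x}^{\ast}$ and in (\ref{ee3-27}) to pass $p_{t}^{P^{x_{i}^{n}}}$ to the limit --- is to express every quantity to be passed to the limit as $E_{P}[\xi]$ with $\xi$ a fixed element of $L_{G}^{1}(\Omega_{T})$ and invoke Lemma 29 of \cite{DHP11}. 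Finally, note that your ``decisive computation'' in the middle paragraph presupposes the fixed $P_{t,x}^{\ast}$ and asserts $\tilde{Y}_{t}\geq0$ under it; as your own last paragraph implicitly concedes, that inequality is not available until the measure has been constructed from the maximizers $P^{\rho}$, because under an arbitrary $P_{t,x}^{\ast}\in\mathcal{P}_{t,x}^{\ast}$ the perturbed equation does not collapse to a classical BSDE ($K^{t,x,u^{\rho}}$ need not vanish under $P_{t,x}^{\ast}$, only $K^{t,x,\bar{u}}$ does), so the one-sided expansion of $Y_{t}^{\rho}-Y_{t}^{t,x,\bar{u}}$ under that measure goes in the useless direction.
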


Hu and Ji \cite{HJ1} obtained that the value function $V(\cdot)$ satisfies the
DPP. Based on DPP, they obtained the following theorem.

\begin{theorem}
\label{th-DPP}(\cite{HJ1}) Suppose that Assumptions (H1) and (H2) hold. Let
$V(\cdot)$ be the value function defined in (\ref{e2-3}). Then $V(\cdot)$ is
the unique viscosity solution of the following second-order partial
differential equation:%
\begin{equation}
\partial_{t}V(t,x)+\inf_{u\in U}G(F(t,x,V(t,x),\partial_{x}V(t,x),\partial
_{xx}^{2}V(t,x),u))=0,\text{ }V(T,x)=\Phi(x), \label{e2-6}%
\end{equation}
where $F:[0,T]\times \mathbb{R}\times \mathbb{R}\times \mathbb{R}\times
\mathbb{R}\times U\rightarrow \mathbb{R}$ defined by%
\[
F(t,x,a_{1},a_{2},a_{3},u)=\sigma^{2}(t,x,u)a_{3}+2h(t,x,u)a_{2}%
+2g(t,x,a_{1},\sigma(t,x,u)a_{2},u).
\]

\end{theorem}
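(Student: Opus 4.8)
The plan is to read off the HJB equation \eqref{e2-6} from the dynamic programming principle (DPP) of \cite{HJ1} and then to obtain uniqueness from a comparison principle. I state the DPP in backward-semigroup form: for $u(\cdot)\in\mathcal{U}^{t}[t,t+\delta]$ and a terminal variable $\eta$, let $\mathbb{G}_{t,t+\delta}^{t,x,u}[\eta]:=\tilde{Y}_{t}$, where $(\tilde{Y},\tilde{Z},\tilde{K})$ solves the $G$-BSDE on $[t,t+\delta]$ with generator $g$ evaluated along $X^{t,x,u}$ and terminal value $\eta$. The DPP then reads
\[
V(t,x)=\inf_{u\in\mathcal{U}^{t}[t,t+\delta]}\mathbb{G}_{t,t+\delta}^{t,x,u}\big[V(t+\delta,X_{t+\delta}^{t,x,u})\big],\qquad 0<\delta\le T-t.
\]
The terminal condition $V(T,x)=\Phi(x)$ is immediate from \eqref{e2-1}-\eqref{e2-3}. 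First I would record the continuity of $V$ in $(t,x)$ with polynomial growth, which follows from the estimate \eqref{ee2-1} and the standard stability estimates for $G$-BSDEs under (H1)-(H2); this places $V$ in the class where a comparison principle applies. The analytic core is the single limit, valid for every fixed $u\in U$,
\[
\lim_{\delta\downarrow0}\frac{1}{\delta}\Big(\mathbb{G}_{t,t+\delta}^{t,x,u}\big[\varphi(t+\delta,X_{t+\delta}^{t,x,u})\big]-\varphi(t,x)\Big)=\partial_{t}\varphi(t,x)+G\big(F(t,x,\varphi,\partial_{x}\varphi,\partial_{xx}^{2}\varphi,u)\big),
\]
for $\varphi\in C^{1,2}$; everything else is bookkeeping around it.

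Granting this limit, the viscosity sub- and supersolution properties follow from the DPP. For the subsolution property, fix $(t,x)$ and $\varphi\in C^{1,2}$ with $\varphi\ge V$ and $\varphi(t,x)=V(t,x)$. Freezing a constant control $u\in U$, the DPP gives $V(t,x)\le\mathbb{G}_{t,t+\delta}^{t,x,u}[V(t+\delta,\cdot)]$, and the comparison theorem for $G$-BSDEs (monotonicity of $\mathbb{G}$ in its terminal datum, see \cite{HJPS}) lets me replace $V(t+\delta,\cdot)$ by $\varphi(t+\delta,\cdot)$. Thus the difference in the key limit is nonnegative; dividing by $\delta$, letting $\delta\downarrow0$ and taking the infimum over $u$ yields
\[
\partial_{t}\varphi(t,x)+\inf_{u\in U}G\big(F(t,x,\varphi,\partial_{x}\varphi,\partial_{xx}^{2}\varphi,u)\big)\ge0.
\]
The supersolution property is obtained symmetrically for $\varphi\le V$ with $\varphi(t,x)=V(t,x)$: instead of a frozen control I choose, for each $\delta$, an $\varepsilon\delta$-optimal control $u^{\delta}$ realising the infimum in the DPP, use monotonicity to pass to $\varphi$, and pass to the limit; here the difference is bounded above by $\varepsilon\delta$, while the left side of the key limit stays $\ge\partial_{t}\varphi+\inf_{u}G(F)$ by continuity of the coefficients in $u$, giving the reverse inequality after $\varepsilon\downarrow0$.

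The main obstacle is proving the key limit, i.e.\ differentiating the \emph{nonlinear} backward semigroup $\mathbb{G}$. Applying the $G$-It\^{o} formula to $\varphi(s,X_{s}^{t,x,u})$ with $dX_{s}=h\,d\langle B\rangle_{s}+\sigma\,dB_{s}$ and $d\langle X\rangle_{s}=\sigma^{2}\,d\langle B\rangle_{s}$, the finite-variation part produces $\partial_{t}\varphi\,ds+(h\partial_{x}\varphi+\tfrac{1}{2}\sigma^{2}\partial_{xx}^{2}\varphi+g)\,d\langle B\rangle_{s}$, and the conditional $G$-expectation converts the $d\langle B\rangle_{s}$-integral into the $G$-operator via $G(a)=\tfrac{1}{2}\sup_{\gamma\in\Sigma}\gamma a$ together with $d\langle B\rangle_{s}=\gamma_{s}ds$; the factor $2$ in $F$ against the $\tfrac12$ in $G$ reproduces \eqref{e2-6} exactly. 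The delicate points, having no classical analogue, are that (i) the nondecreasing $G$-martingale $\tilde{K}$ and the $dB$-martingale $\int\tilde{Z}\,dB$ must be shown to contribute only $o(\delta)$ after dividing by $\delta$, which requires the $G$-BSDE estimates \eqref{ee2-1} rather than any linear-expectation manipulation, and (ii) one must justify $\tilde{Z}_{s}\approx\sigma(s,X_{s},u)\partial_{x}\varphi$ (the $Z$-relation already visible in \eqref{e1-3}) in order to recover the coupling $2g(t,x,a_{1},\sigma(t,x,u)a_{2},u)$ inside $F$, since $g$ depends on $z$ and $z$ is matched to $\sigma\partial_{x}\varphi$. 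Finally, uniqueness follows from a comparison principle for \eqref{e2-6}: after the sign normalisation the operator $-\partial_{t}V-\inf_{u}G(F)$ is degenerate parabolic (proper), and by (H1)-(H2) with $U$ compact it is Lipschitz in $(\partial_{x}V,\partial_{xx}^{2}V)$ and monotone in $V$ uniformly in $u$, so the Crandall-Ishii-Lions doubling-of-variables argument applies in the class of continuous functions with polynomial growth; verifying these structural conditions through the inner $G$ and the $g_{z}\sigma$ coupling is the remaining technical step.
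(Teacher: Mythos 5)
You should first note that the paper itself contains no proof of this statement: Theorem \ref{th-DPP} is quoted from \cite{HJ1} as a preliminary (the paper only records that ``based on DPP, they obtained the following theorem''), so the only meaningful comparison is with the proof in \cite{HJ1}. At the architectural level your proposal follows that proof's route: the backward semigroup $\mathbb{G}^{t,x,u}_{t,t+\delta}$ defined through the $G$-BSDE, the DPP (itself a hard result in \cite{HJ1}, proved by their implied-partition method, but legitimate for you to assume here), a small-time expansion of $\mathbb{G}$ along smooth test functions, and a comparison principle for uniqueness. Your subsolution argument (frozen constant controls plus monotonicity of $\mathbb{G}$ in its terminal datum, i.e.\ the $G$-BSDE comparison theorem of \cite{HJPS}) and your uniqueness sketch are consistent with that route.

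The genuine gap is in the supersolution half, and it sits exactly where the $G$-framework departs from the classical one. Your key limit is proved (and can only be proved) for a fixed constant control $u\in U$, whereas the $\varepsilon\delta$-optimal controls $u^{\delta}$ in the DPP are adapted, path-dependent processes on $[t,t+\delta]$ which need not be close to any constant control (e.g.\ $u^{\delta}_{r}$ may switch between two values of $U$ according to the sign of $B_{r}-B_{t}$). Hence ``continuity of the coefficients in $u$'' proves nothing: what is needed is a lower bound for $\frac{1}{\delta}\bigl(\mathbb{G}^{t,x,u}_{t,t+\delta}[\varphi(t+\delta,X^{t,x,u}_{t+\delta})]-\varphi(t,x)\bigr)$ that is \emph{uniform over all admissible controls}, not a limit along any single one. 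The mechanism that produces this bound is missing from your sketch: one must (a) freeze the coefficients at $(t,x)$ with errors $o(\delta)$ uniform in the control, which requires $\sup_{u}\mathbb{\hat{E}}[\sup_{r\leq t+\delta}|X^{t,x,u}_{r}-x|^{2}]\leq C\delta$ and matching uniform $G$-BSDE estimates; (b) invoke the $G$-BSDE comparison theorem to replace the generator driven by $u^{\delta}$ by the generator in which the infimum over $v\in U$ is taken pointwise \emph{inside}; and (c) identify the small-time limit of the resulting $G$-BSDE as $\partial_{t}\varphi+G(\inf_{v}F(t,x,\ldots,v))$ and then convert $G(\inf_{v}F)$ into $\inf_{v}G(F)$. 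In the scalar setting of this paper step (c) is saved by the fact that $G:\mathbb{R}\rightarrow\mathbb{R}$ is nondecreasing and continuous and $U$ is compact, but it is precisely the point with no classical analogue: sublinearity of $G$ makes ``inf inside'' and ``inf outside'' differ in general, and in the multidimensional case (to which the paper asserts the theorem extends) $\inf_{v}F(v)$ does not even exist in the matrix order, which is why \cite{HJ1} argues instead through approximation of admissible controls in $M^{2,t}_{G}$ by step controls and the tower property of the conditional $G$-expectation, applying $G$ step by step to the frozen control values. Without (a)--(c), or the step-control argument, the supersolution inequality is unproved --- and that inequality is the heart of the theorem; the rest of your proposal is, as you yourself say, bookkeeping.
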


\section{Main results}

In the following, we first study the relationship between MP and DPP under the
smooth case.

\begin{theorem}
\label{MP-DPP-1}Suppose that Assumptions (H1) and (H2) hold. Let $\bar
{u}(\cdot)\in \mathcal{U}^{t}[t,T]$ be an optimal control for the control
problem (\ref{e2-3}) and let $(X^{t,x,\bar{u}},Y^{t,x,\bar{u}},Z^{t,x,\bar{u}%
},K^{t,x,\bar{u}})$ be the corresponding trajectory.

\begin{description}
\item[(1)] If the value function $V(\cdot)\in C^{1,2}([t,T]\times \mathbb{R})$
and $\partial_{xx}^{2}V(\cdot)$ is of polynomial growth in $x$, then
\begin{equation}
\mathcal{\tilde{P}}_{t,x}^{\ast}=\{P\in \mathcal{P}:E_{P}[\tilde{K}_{T}]=0\}
\label{eee3-1}%
\end{equation}
is nonempty and belongs to $\mathcal{P}_{t,x}^{\ast}$, where $\mathcal{P}%
_{t,x}^{\ast}$ is defined in (\ref{e2-3-1}),%
\begin{equation}%
\begin{array}
[c]{l}%
\tilde{K}_{s}=\frac{1}{2}\int_{t}^{s}F(r)d\langle B\rangle_{r}-\int_{t}%
^{s}G(F(r))dr\text{, }s\in \lbrack t,T],\\
F(s)=F(s,X_{s}^{t,x,\bar{u}},V(s,X_{s}^{t,x,\bar{u}}),\partial_{x}%
V(s,X_{s}^{t,x,\bar{u}}),\partial_{xx}^{2}V(s,X_{s}^{t,x,\bar{u}}),\bar{u}%
_{s}).
\end{array}
\label{eee3-2}%
\end{equation}
Moreover for any $P_{t,x}^{\ast}\in \mathcal{\tilde{P}}_{t,x}^{\ast}$, we have,
$P_{t,x}^{\ast}$-a.s., for a.e. $s\in \lbrack t,T]$,%
\begin{equation}%
\begin{array}
[c]{rl}%
Y_{s}^{t,x,\bar{u}}= & V(s,X_{s}^{t,x,\bar{u}}),\\
Z_{s}^{t,x,\bar{u}}= & \sigma(s,X_{s}^{t,x,\bar{u}},\bar{u}_{s})\partial
_{x}V(s,X_{s}^{t,x,\bar{u}}),
\end{array}
\label{ee3-1}%
\end{equation}
and%
\begin{equation}%
\begin{array}
[c]{l}%
-\partial_{s}V(s,X_{s}^{t,x,\bar{u}})\\
=G(F(s,X_{s}^{t,x,\bar{u}},V(s,X_{s}^{t,x,\bar{u}}),\partial_{x}%
V(s,X_{s}^{t,x,\bar{u}}),\partial_{xx}^{2}V(s,X_{s}^{t,x,\bar{u}}),\bar{u}%
_{s}))\\
=\underset{u\in U}{\min}G(F(s,X_{s}^{t,x,\bar{u}},V(s,X_{s}^{t,x,\bar{u}%
}),\partial_{x}V(s,X_{s}^{t,x,\bar{u}}),\partial_{xx}^{2}V(s,X_{s}%
^{t,x,\bar{u}}),u)).
\end{array}
\label{ee3-2}%
\end{equation}

\item[(2)] If the value function $V(\cdot)\in C^{1,2}([t,T]\times \mathbb{R})$,
$\partial_{xx}^{2}V(\cdot)$ is of polynomial growth in $x$ and $\partial
_{sx}^{2}V(\cdot)$ is continuous, then for any $P_{t,x}^{\ast}\in
\mathcal{\tilde{P}}_{t,x}^{\ast}$, we have, $P_{t,x}^{\ast}$-a.s., for a.e.
$s\in \lbrack t,T]$,%
\begin{equation}%
\begin{array}
[c]{l}%
\partial_{sx}^{2}V(s,X_{s}^{t,x,\bar{u}})=-\frac{1}{2}\gamma_{s}\partial
_{x}F(s)\text{ if }F(s)\not =0,\\
\partial_{sx}^{2}V(s,X_{s}^{t,x,\bar{u}})=-\frac{1}{2}v_{s}\partial
_{x}F(s)\text{ if }F(s)=0,
\end{array}
\label{eee3-3}%
\end{equation}
where $d\langle B\rangle_{s}=\gamma_{s}ds$, $v_{s}=-2\partial_{sx}%
^{2}V(s,X_{s}^{t,x,\bar{u}})(\partial_{x}F(s))^{-1}I_{\{ \partial
_{x}F(s)\not =0\}}+\gamma_{s}I_{\{ \partial_{x}F(s)=0\}}\in \lbrack
\underline{\sigma}^{2},\bar{\sigma}^{2}]$, $P_{t,x}^{\ast}$-a.s., and%
\begin{equation}
\partial_{x}F(s)=\partial_{x}F(s,X_{s}^{t,x,\bar{u}},V(s,X_{s}^{t,x,\bar{u}%
}),\partial_{x}V(s,X_{s}^{t,x,\bar{u}}),\partial_{xx}^{2}V(s,X_{s}%
^{t,x,\bar{u}}),\bar{u}_{s}). \label{eee3-5}%
\end{equation}

\item[(3)] If the value function $V(\cdot)\in C^{1,3}([t,T]\times \mathbb{R})$,
$\partial_{xx}^{2}V(\cdot)$ is of polynomial growth in $x$, $\partial_{sx}%
^{2}V(\cdot)$ is continuous and there exists a $P_{t,x}^{\ast}\in
\mathcal{\tilde{P}}_{t,x}^{\ast}$ such that, $P_{t,x}^{\ast}$-a.s., for a.e.
$s\in \lbrack t,T]$,%
\begin{equation}
\partial_{sx}^{2}V(s,X_{s}^{t,x,\bar{u}})=-\frac{1}{2}\gamma_{s}\partial
_{x}F(s)\text{ if }F(s)=0, \label{eee3-4}%
\end{equation}
then we have, $P_{t,x}^{\ast}$-a.s., for a.e. $s\in \lbrack t,T]$,%
\begin{equation}%
\begin{array}
[c]{rl}%
p_{s}= & \partial_{x}V(s,X_{s}^{t,x,\bar{u}}),\\
q_{s}= & \sigma(s,X_{s}^{t,x,\bar{u}},\bar{u}_{s})\partial_{xx}^{2}%
V(s,X_{s}^{t,x,\bar{u}}),\\
N_{s}= & 0,
\end{array}
\label{ee3-3}%
\end{equation}
and%
\begin{equation}
\langle H_{u}(X_{s}^{t,x,\bar{u}},Y_{s}^{t,x,\bar{u}},Z_{s}^{t,x,\bar{u}}%
,\bar{u}_{s},\bar{u}_{s},p_{s},q_{s},s),u-\bar{u}_{s}\rangle \geq0\text{,
}\forall u\in U\text{,} \label{ee3-4}%
\end{equation}
where $(p(\cdot),q(\cdot),N(\cdot))$ is the solution of the adjoint equation
(\ref{e2-4}) under $P_{t,x}^{\ast}$, $H(\cdot)$ is defined in (\ref{e2-5}).
\end{description}
\end{theorem}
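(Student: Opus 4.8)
The common engine for all three parts is to apply the $G$-It\^{o} formula to $V$ evaluated along the optimal trajectory $\bar X_s:=X_s^{t,x,\bar u}$ and to match the outcome, via the relevant uniqueness theorems, against the $G$-BSDE in (\ref{e2-1}) and the adjoint equation (\ref{e2-4}); throughout I use that $V\in C^{1,2}$ is, by Theorem \ref{th-DPP}, a classical solution of the HJB equation (\ref{e2-6}). For Part (1) I would first expand $dV(s,\bar X_s)$ using $d\langle\bar X\rangle_s=\sigma^2\,d\langle B\rangle_s$ and the definition of $F$, obtaining that $(\tilde Y_s,\tilde Z_s):=(V(s,\bar X_s),\sigma(s,\bar X_s,\bar u_s)\partial_x V(s,\bar X_s))$ solves
$$d\tilde Y_s=-g(s,\bar X_s,\tilde Y_s,\tilde Z_s,\bar u_s)\,d\langle B\rangle_s+\tilde Z_s\,dB_s+d\tilde K_s+dA_s,$$
where I substitute $\partial_s V=-\min_u G(F(\cdots,u))$ from (\ref{e2-6}), $\tilde K$ is the process in (\ref{eee3-2}), and $dA_s=[G(F(s))-\min_u G(F(\cdots,u))]\,ds\ge0$. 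Here $\tilde K$ is a non-increasing $G$-martingale since $\frac12\int\eta\,d\langle B\rangle-\int G(\eta)\,ds$ always is, while $A$ is non-decreasing. Non-emptiness of $\tilde{\mathcal P}^*_{t,x}$ then follows because $\tilde K$ is a non-increasing $G$-martingale with $\tilde K_t=0$, so $\sup_P E_P[\tilde K_T]=\mathbb{\hat E}[\tilde K_T]=0$ and weak compactness of $\mathcal P$ produces a maximizer attaining $E_P[\tilde K_T]=0$.

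To close Part (1) I would exploit the DPP flow property $V(s,\bar X_s)=\inf_u Y_s^{s,\bar X_s,u}\le Y_s^{t,x,\bar u}$ q.s., which is an equality at $s=t$ by optimality of $\bar u$. Fixing $P\in\tilde{\mathcal P}^*_{t,x}$ makes $\tilde K\equiv0$ $P$-a.s., so $\tilde Y$ is a subsolution and $Y^{t,x,\bar u}$ a supersolution of the classical BSDE under $P$ with generator $g(\cdot,\bar u)$ and terminal $\Phi(\bar X_T)$; writing $(\hat Y,\hat Z)$ for that classical solution one gets $\tilde Y_s\le\hat Y_s\le Y_s^{t,x,\bar u}$, together with the linearized representations $\hat Y_t-\tilde Y_t=E_P[\int_t^T\Gamma_r\,dA_r]$ and $Y_t^{t,x,\bar u}-\hat Y_t=-E_P[\int_t^T\Gamma'_r\,dK_r]$ with strictly positive weights (bounded by (H2)). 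The scalar equality at $t$ forces both terms to vanish, hence $A\equiv0$ and $K\equiv0$ $P$-a.s.; this gives at once $\tilde{\mathcal P}^*_{t,x}\subseteq\mathcal P^*_{t,x}$, the minimality (\ref{ee3-2}), and $Y_s^{t,x,\bar u}=V(s,\bar X_s)$ with $Z_s^{t,x,\bar u}=\tilde Z_s$ after matching $dB_s$-parts, i.e.\ (\ref{ee3-1}). I expect this sandwiching---separating the two monotone parts $A$ and $K$ out of a single equality at $t$, performed $P$-a.s.\ within the $G$-framework---to be the main obstacle.

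For Part (2) the plan is to differentiate the HJB identity $-\partial_s V(s,x)=\min_u G(F(s,x,V,\partial_x V,\partial^2_{xx}V,u))$ in $x$ along $\bar X$. On $\{F(s)\neq0\}$ the function $G$ is smooth near $F(s)$ with $G(a)=\frac12\gamma^* a$, $\gamma^*\in\{\underline\sigma^2,\bar\sigma^2\}$, and the defining property of $\tilde{\mathcal P}^*_{t,x}$ forces $\gamma_s=\gamma^*$; the envelope theorem removes the $\partial_u$-term at the minimizer $\bar u_s$ supplied by Part (1), yielding $-\partial^2_{sx}V=\frac12\gamma_s\partial_x F(s)$, the first line of (\ref{eee3-3}), with the continuity of $\partial^2_{sx}V$ making the identity meaningful. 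On $\{F(s)=0\}$ the non-differentiability of $G$ at $0$ blocks this, so I would instead define $v_s$ by $-\partial^2_{sx}V=\frac12 v_s\partial_x F(s)$ and verify $v_s\in[\underline\sigma^2,\bar\sigma^2]$ through one-sided difference quotients of $G$; this verification is the delicate point of Part (2).

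For Part (3), using $V\in C^{1,3}$ I would apply $G$-It\^{o} to $\partial_x V(s,\bar X_s)$: its $dB_s$-coefficient is $\sigma\partial^2_{xx}V$, matching the candidate $q_s$, so I set $\hat p_s=\partial_x V(s,\bar X_s)$, $\hat q_s=\sigma\partial^2_{xx}V(s,\bar X_s)$, $\hat N\equiv0$. Differentiating the PDE twice in $x$ and inserting the Part (2) formula for $\partial^2_{sx}V$---now available with $\gamma_s$ even on $\{F(s)=0\}$ thanks to hypothesis (\ref{eee3-4})---converts the $\partial^2_{sx}V\,ds$ term into a $d\langle B\rangle_s$ term and shows the total drift equals $-\{[h_x+g_y+g_z\sigma_x]\hat p+[g_z+\sigma_x]\hat q+g_x\}\,d\langle B\rangle_s$ with no residual orthogonal martingale, so $\hat N=0$; uniqueness of (\ref{e2-4}) under $P^*_{t,x}$ then gives (\ref{ee3-3}). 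Finally a direct computation from (\ref{e2-5}) and the definition of $F$ gives $H_u=\frac12\partial_u F$ at $(p_s,q_s)=(\partial_x V,\sigma\partial^2_{xx}V)$, so the first-order condition of the minimization (\ref{ee3-2}) over the convex set $U$ (using $G'(F)>0$ when $F\neq0$) yields $\langle\partial_u F,u-\bar u_s\rangle\ge0$ and hence the maximum principle (\ref{ee3-4}). The ingredient (\ref{eee3-4}) is exactly what keeps $\hat N=0$ across $\{F(s)=0\}$, and is the main obstacle of Part (3).
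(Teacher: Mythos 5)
Your proposal is correct and follows essentially the same route as the paper's proof: It\^{o}'s formula applied to $V(s,X_s^{t,x,\bar u})$ producing the non-increasing $G$-martingale $\tilde K$ plus a non-negative drift, the representation theorem and weak compactness of $\mathcal{P}$ for non-emptiness of $\mathcal{\tilde P}_{t,x}^{\ast}$, a comparison/linearization argument under $P_{t,x}^{\ast}$ forcing $K^{t,x,\bar u}\equiv 0$ and the drift to vanish (hence (\ref{ee3-1})--(\ref{ee3-2})), first-order conditions at the spatial minimum of $\partial_s V+G(F(\cdot,\bar u_s))$ combined with $\tilde K_T=0$ pinning $\gamma_s$ for Part (2), and It\^{o} on $\partial_x V(s,X_s^{t,x,\bar u})$ plus uniqueness of the adjoint equation and the identity $H_u=\tfrac12\partial_u F$ for Part (3). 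The only cosmetic difference is that in Part (1) you sandwich through an intermediate classical BSDE solution $\hat Y$, whereas the paper applies the strict comparison theorem of \cite{EPQ} directly to $\tilde Y$ and $Y^{t,x,\bar u}$; the underlying linearization is identical.
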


\begin{proof}
(1) Applying Ito's formula to $V(s,X_{s}^{t,x,\bar{u}})$, we have%
\begin{equation}
\left \{
\begin{array}
[c]{rl}%
d\tilde{Y}_{s}= & \Pi_{s}ds-g(s,X_{s}^{t,x,\bar{u}},\tilde{Y}_{s},\tilde
{Z}_{s},\bar{u}_{s})d\langle B\rangle_{s}+\tilde{Z}_{s}dB_{s}+d\tilde{K}%
_{s},\\
\tilde{Y}_{T}= & \Phi(X_{T}^{t,x,\bar{u}}),\text{ }s\in \lbrack t,T],
\end{array}
\right.  \label{ee3-5}%
\end{equation}
where $\tilde{K}_{s}$ and $F(s)$ are defined in (\ref{eee3-2}),
\[
\tilde{Y}_{s}=V(s,X_{s}^{t,x,\bar{u}}),\text{ }\tilde{Z}_{s}=\sigma
(s,X_{s}^{t,x,\bar{u}},\bar{u}_{s})\partial_{x}V(s,X_{s}^{t,x,\bar{u}}),\text{
}\Pi_{s}=\partial_{s}V(s,X_{s}^{t,x,\bar{u}})+G(F(s)).
\]
By (\ref{ee2-1}) and the assumptions of $V(\cdot)$, we can obtain%
\[
\mathbb{\hat{E}}\left[  \sup_{s\in \lbrack t,T]}|\tilde{Y}_{s}|^{2}+\int
_{t}^{T}(|\tilde{Z}_{s}|^{2}+|F(s)|^{2})ds\right]  <\infty.
\]
It follows from Proposition 4.1.4 in \cite{P2019} that $(\tilde{K}_{s}%
)_{s\in \lbrack t,T]}$ is a non-increasing $G$-martingale with $\tilde{K}%
_{t}=0$ and $\mathbb{\hat{E}}\left[  |\tilde{K}_{T}|^{2}\right]  <\infty$.
Noting that $V(\cdot)$ is a solution to PDE (\ref{e2-6}), we obtain $\Pi
_{s}\geq0$. On the other hand, $(Y^{t,x,\bar{u}},Z^{t,x,\bar{u}}%
,K^{t,x,\bar{u}})$ satisfies%
\begin{equation}
\left \{
\begin{array}
[c]{rl}%
dY_{s}^{t,x,\bar{u}}= & -g(s,X_{s}^{t,x,\bar{u}},Y_{s}^{t,x,\bar{u}}%
,Z_{s}^{t,x,\bar{u}},\bar{u}_{s})d\langle B\rangle_{s}+Z_{s}^{t,x,\bar{u}%
}dB_{s}+dK_{s}^{t,x,\bar{u}},\\
Y_{T}^{t,x,\bar{u}}= & \Phi(X_{T}^{t,x,\bar{u}}),\text{ }s\in \lbrack t,T].
\end{array}
\right.  \label{ee3-6}%
\end{equation}
By $\mathbb{\hat{E}}\left[  \tilde{K}_{T}\right]  =0$ and Theorem \ref{re-1},
it is easy to get that $\mathcal{\tilde{P}}_{t,x}^{\ast}$ defined in
(\ref{eee3-1}) is nonempty. For any given $P_{t,x}^{\ast}\in \mathcal{\tilde
{P}}_{t,x}^{\ast}$, since%
\[
dK_{s}^{t,x,\bar{u}}-d\tilde{K}_{s}-\Pi_{s}ds=dK_{s}^{t,x,\bar{u}}-\Pi
_{s}ds\leq0\text{ for }s\in \lbrack t,T]\text{, }P_{t,x}^{\ast}\text{-a.s.,}%
\]
and $\tilde{Y}_{t}=V(t,x)=Y_{t}^{t,x,\bar{u}}$, by the strict comparison
theorem for BSDEs (\ref{ee3-5}) and (\ref{ee3-6}) under $P_{t,x}^{\ast}$ (see
Theorem 2.2 in \cite{EPQ}), we obtain%
\[
dK_{s}^{t,x,\bar{u}}-\Pi_{s}ds=0\text{ for }s\in \lbrack t,T],\text{ }%
P_{t,x}^{\ast}\text{-a.s.,}%
\]
which implies $K_{T}^{t,x,\bar{u}}=0$ and $\Pi_{s}=0$ for $s\in \lbrack t,T],$
$P_{t,x}^{\ast}$-a.s. Thus $P_{t,x}^{\ast}\in \mathcal{P}_{t,x}^{\ast}$ and
(\ref{ee3-1}), (\ref{ee3-2}) hold under $P_{t,x}^{\ast}$.

(2) By (\ref{e2-6}), we know%
\begin{equation}
\partial_{s}V(s,x)+G(F(s,x,V(s,x),\partial_{x}V(s,x),\partial_{xx}%
^{2}V(s,x),\bar{u}_{s}))\geq0. \label{ee3-7}%
\end{equation}
For any given $P_{t,x}^{\ast}\in \mathcal{\tilde{P}}_{t,x}^{\ast}$, by
(\ref{ee3-2}) we know, $P_{t,x}^{\ast}$-a.s., for a.e. $s\in \lbrack t,T]$,%
\begin{equation}
\partial_{s}V(s,X_{s}^{t,x,\bar{u}})+G(F(s))=0. \label{ee3-8}%
\end{equation}
Under the case $F(s)>0$, noting that $F(s,x,V(s,x),\partial_{x}V(s,x),\partial
_{xx}^{2}V(s,x),\bar{u}_{s})$ is continuous in $x$, we obtain that, for
$x\rightarrow X_{s}^{t,x,\bar{u}}$,%
\begin{align*}
&  G(F(s,x,V(s,x),\partial_{x}V(s,x),\partial_{xx}^{2}V(s,x),\bar{u}%
_{s}))-G(F(s))\\
&  =\frac{1}{2}\bar{\sigma}^{2}[F(s,x,V(s,x),\partial_{x}V(s,x),\partial
_{xx}^{2}V(s,x),\bar{u}_{s})-F(s)],
\end{align*}
which implies $\partial_{sx}^{2}V(s,X_{s}^{t,x,\bar{u}})+\frac{1}{2}%
\bar{\sigma}^{2}\partial_{x}F(s)=0$. Since%
\begin{align*}
\tilde{K}_{T}  &  =\frac{1}{2}\int_{t}^{T}F(s)d\langle B\rangle_{s}-\int
_{t}^{T}G(F(s))ds\\
&  =\frac{1}{2}\int_{t}^{T}(F(s))^{+}(\gamma_{s}-\bar{\sigma}^{2})ds+\frac
{1}{2}\int_{t}^{T}(F(s))^{-}(\underline{\sigma}^{2}-\gamma_{s})ds\\
&  =0,\text{ }P_{t,x}^{\ast}\text{-a.s.,}%
\end{align*}
and $\underline{\sigma}^{2}\leq \gamma_{s}\leq \bar{\sigma}^{2}$, we get
$\gamma_{s}=\bar{\sigma}^{2}$ if $F(s)>0$. Thus we have $\partial_{sx}%
^{2}V(s,X_{s}^{t,x,\bar{u}})=-\frac{1}{2}\gamma_{s}\partial_{x}F(s)$ under the
case $F(s)>0$. By the same method, we can get $\partial_{sx}^{2}%
V(s,X_{s}^{t,x,\bar{u}})=-\frac{1}{2}\gamma_{s}\partial_{x}F(s)$ under the
case $F(s)<0$.

Under the case $F(s)=0$ and $\partial_{x}F(s)=0$, it is clear that
$\partial_{x}|F(s)|=0$. Since $|G(a)|\leq \frac{1}{2}\bar{\sigma}^{2}|a|$ for
$a\in \mathbb{R}$, we have $\partial_{x}G(F(s))=0$. Thus $\partial_{sx}%
^{2}V(s,X_{s}^{t,x,\bar{u}})=0$ by (\ref{ee3-7}) and (\ref{ee3-8}), which
implies $\partial_{sx}^{2}V(s,X_{s}^{t,x,\bar{u}})=-\frac{1}{2}\gamma
_{s}\partial_{x}F(s)$. Under the case $F(s)=0$ and $\partial_{x}F(s)>0$, it is
obvious that
\[
F(s,x,V(s,x),\partial_{x}V(s,x),\partial_{xx}^{2}V(s,x),\bar{u}_{s})>0\text{
for }x\downarrow X_{s}^{t,x,\bar{u}}%
\]
and
\[
F(s,x,V(s,x),\partial_{x}V(s,x),\partial_{xx}^{2}V(s,x),\bar{u}_{s})<0\text{
for }x\uparrow X_{s}^{t,x,\bar{u}}.
\]
Thus we obtain $\partial_{sx}^{2}V(s,X_{s}^{t,x,\bar{u}})+\frac{1}{2}%
\bar{\sigma}^{2}\partial_{x}F(s)\geq0$ and $\partial_{sx}^{2}V(s,X_{s}%
^{t,x,\bar{u}})+\frac{1}{2}\underline{\sigma}^{2}\partial_{x}F(s)\leq0$ by
(\ref{ee3-7}) and (\ref{ee3-8}), which implies $\partial_{sx}^{2}%
V(s,X_{s}^{t,x,\bar{u}})=-\frac{1}{2}v_{s}\partial_{x}F(s)$ for some $v_{s}%
\in \lbrack \underline{\sigma}^{2},\bar{\sigma}^{2}]$. By the same analysis, we
can get $\partial_{sx}^{2}V(s,X_{s}^{t,x,\bar{u}})=-\frac{1}{2}v_{s}%
\partial_{x}F(s)$ under the case $F(s)=0$ and $\partial_{x}F(s)<0$.

(3) Applying Ito's formula to $\partial_{x}V(s,X_{s}^{t,x,\bar{u}})$ under
$P_{t,x}^{\ast}$, we have%
\[
\left \{
\begin{array}
[c]{rl}%
d\partial_{x}V(s,X_{s}^{t,x,\bar{u}})= & [\partial_{xx}^{2}V(s,X_{s}%
^{t,x,\bar{u}})h(s,X_{s}^{t,x,\bar{u}},u_{s})+\frac{1}{2}\partial_{xxx}%
^{3}V(s,X_{s}^{t,x,\bar{u}})\sigma^{2}(s,X_{s}^{t,x,\bar{u}},u_{s})]d\langle
B\rangle_{s}\\
& +\partial_{sx}^{2}V(s,X_{s}^{t,x,\bar{u}})ds+\partial_{xx}^{2}%
V(s,X_{s}^{t,x,\bar{u}})\sigma(s,X_{s}^{t,x,\bar{u}},u_{s})dB_{s}\\
\partial_{x}V(T,X_{T}^{t,x,\bar{u}})= & \Phi^{\prime}(X_{T}^{t,x,\bar{u}%
}),\text{ }s\in \lbrack t,T].
\end{array}
\right.
\]
By (\ref{eee3-3}) and (\ref{eee3-4}), we know $\partial_{sx}^{2}%
V(s,X_{s}^{t,x,\bar{u}})ds=-\frac{1}{2}\partial_{x}F(s)d\langle B\rangle_{s}$
$P_{t,x}^{\ast}$-a.s. Then, by (\ref{ee3-1}), it is easy to verify that%
\[
(p_{s},q_{s},N_{s})=(\partial_{x}V(s,X_{s}^{t,x,\bar{u}}),\sigma
(s,X_{s}^{t,x,\bar{u}},\bar{u}_{s})\partial_{xx}^{2}V(s,X_{s}^{t,x,\bar{u}%
}),0)
\]
satisfies the adjoint equation (\ref{e2-4}) under $P_{t,x}^{\ast}$, which
implies (\ref{ee3-3}).

It follows from (\ref{ee3-2}) that, $P_{t,x}^{\ast}$-a.s., for a.e.
$s\in \lbrack t,T]$,
\[
G(F(s,X_{s}^{t,x,\bar{u}},V(s,X_{s}^{t,x,\bar{u}}),\partial_{x}V(s,X_{s}%
^{t,x,\bar{u}}),\partial_{xx}^{2}V(s,X_{s}^{t,x,\bar{u}}),u))\geq G(F(s)).
\]
Under the case $F(s)>0$, for any given $u\in U$, we know $u_{s}^{\varepsilon
}=\bar{u}_{s}+\varepsilon(u-\bar{u}_{s})\in U$ with $\varepsilon \in
\lbrack0,1]$ and%
\begin{align*}
&  G(F(s,X_{s}^{t,x,\bar{u}},V(s,X_{s}^{t,x,\bar{u}}),\partial_{x}%
V(s,X_{s}^{t,x,\bar{u}}),\partial_{xx}^{2}V(s,X_{s}^{t,x,\bar{u}}%
),u_{s}^{\varepsilon}))-G(F(s))\\
&  =\frac{1}{2}\bar{\sigma}^{2}[F(s,X_{s}^{t,x,\bar{u}},V(s,X_{s}^{t,x,\bar
{u}}),\partial_{x}V(s,X_{s}^{t,x,\bar{u}}),\partial_{xx}^{2}V(s,X_{s}%
^{t,x,\bar{u}}),u_{s}^{\varepsilon})-F(s)]\\
&  \geq0\text{ for }\varepsilon \downarrow0.
\end{align*}
From this, it is easy to deduce that
\begin{equation}
\langle \partial_{u}F(s,X_{s}^{t,x,\bar{u}},V(s,X_{s}^{t,x,\bar{u}}%
),\partial_{x}V(s,X_{s}^{t,x,\bar{u}}),\partial_{xx}^{2}V(s,X_{s}^{t,x,\bar
{u}}),\bar{u}_{s}),u-\bar{u}_{s}\rangle \geq0\text{, }\forall u\in U\text{.}
\label{ee3-9}%
\end{equation}
By the same method, it is easy to check that (\ref{ee3-9}) still holds under
the cases $F(s)<0$ and $F(s)=0$. Thus it is easy to get (\ref{ee3-4}) by
(\ref{ee3-9}).
\end{proof}

\begin{remark}
Although we obtain (\ref{eee3-3}), we do not know whether there exists a
$P_{t,x}^{\ast}\in \mathcal{\tilde{P}}_{t,x}^{\ast}$ such that (\ref{eee3-4})
holds (see Remark 2.3 in \cite{STZ11}).
\end{remark}

The following proposition gives a sufficient condition for the assumption
(\ref{eee3-4}) to hold.

\begin{proposition}
Suppose that Assumptions (H1) and (H2) hold. Let $\bar{u}(\cdot)\in
\mathcal{U}^{t}[t,T]$ be an optimal control for the control problem
(\ref{e2-3}) and let $(X^{t,x,\bar{u}},Y^{t,x,\bar{u}},Z^{t,x,\bar{u}%
},K^{t,x,\bar{u}})$ be the corresponding trajectory. The value function
$V(\cdot)\in C^{1,2}([t,T]\times \mathbb{R})$, $\partial_{xx}^{2}V(\cdot)$ is
of polynomial growth in $x$ and $\partial_{sx}^{2}V(\cdot)$ is continuous. If
$h(\cdot)$, $\sigma(\cdot)$ are bounded functions and there exists a $\beta>0$
such that $\sigma^{2}(\cdot)\geq \beta$, then the assumption (\ref{eee3-4})
holds for any $P_{t,x}^{\ast}\in \mathcal{\tilde{P}}_{t,x}^{\ast}$, where
$\mathcal{\tilde{P}}_{t,x}^{\ast}$ is defined in (\ref{eee3-1}).
\end{proposition}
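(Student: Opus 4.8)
The plan is to show that, under the extra hypotheses, the only genuinely ambiguous set left open by Part (2),
\[
\Lambda:=\{s\in[t,T]:F(s)=0\text{ and }\partial_{x}F(s)\neq0\},
\]
is Lebesgue-negligible in time, $P_{t,x}^{\ast}$-a.s. Granting this, (\ref{eee3-4}) is immediate. Indeed, on $\{F(s)=0\}$ Part (2) already gives $\partial_{sx}^{2}V(s,X_{s}^{t,x,\bar{u}})=-\frac{1}{2}v_{s}\partial_{x}F(s)$ with $v_{s}\in[\underline{\sigma}^{2},\bar{\sigma}^{2}]$, so for a.e. $s$ with $F(s)=0$ that does not lie in $\Lambda$ one has $\partial_{x}F(s)=0$ and therefore $\partial_{sx}^{2}V(s,X_{s}^{t,x,\bar{u}})=0=-\frac{1}{2}\gamma_{s}\partial_{x}F(s)$; since $\Lambda$ carries no time-measure, nothing remains to be verified.

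To exhibit $\Lambda$ as a set hit by the state process, I would first use (\ref{ee3-8}), i.e. $\partial_{s}V(s,X_{s}^{t,x,\bar{u}})+G(F(s))=0$, together with the elementary fact that $G(a)=0$ iff $a=0$ (because $\bar{\sigma}\geq\underline{\sigma}>0$); this identifies $\{F(s)=0\}$ with $\{\partial_{s}V(s,X_{s}^{t,x,\bar{u}})=0\}$. On this set, the Part (2) relation $\partial_{sx}^{2}V=-\frac{1}{2}v_{s}\partial_{x}F(s)$ with $v_{s}>0$ shows that $\partial_{x}F(s)\neq0$ is equivalent to $\partial_{sx}^{2}V(s,X_{s}^{t,x,\bar{u}})\neq0$. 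Consequently, up to a time-null set, $\Lambda=\{s:(s,X_{s}^{t,x,\bar{u}})\in\mathcal{N}\}$, where $\mathcal{N}:=\{(s,x):\partial_{s}V(s,x)=0,\ \partial_{sx}^{2}V(s,x)\neq0\}$. The purpose of passing from $F$ to $\partial_{s}V$ is that $\mathcal{N}$ is a \emph{deterministic} space-time set, free of the admissible control $\bar{u}_{s}$. Since $\partial_{s}V$ and $\partial_{sx}^{2}V$ are continuous, $\mathcal{N}$ is Borel, and for each fixed $s$ its slice is the set of transversal zeros of the $C^{1}$ map $x\mapsto\partial_{s}V(s,x)$, hence discrete and Lebesgue-null; by Tonelli, $\mathcal{N}$ is Lebesgue-null in $[t,T]\times\mathbb{R}$.

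The heart of the argument is then an occupation-density estimate under $P_{t,x}^{\ast}$. Under $P_{t,x}^{\ast}$ the state equation reads $dX_{s}^{t,x,\bar{u}}=h(\cdot)\gamma_{s}\,ds+\sigma(\cdot)\,dB_{s}$ with $d\langle B\rangle_{s}=\gamma_{s}ds$ and $\underline{\sigma}^{2}\leq\gamma_{s}\leq\bar{\sigma}^{2}$, so the diffusion coefficient $\sigma^{2}(\cdot)\gamma_{s}\geq\beta\underline{\sigma}^{2}>0$ is uniformly nondegenerate and, with $h,\sigma$ bounded, both drift and diffusion are bounded. For such an It\^{o} process I would invoke Krylov's $L^{d+1}$-estimate (here $d=1$), or equivalently the absolute continuity of the marginal law of $X_{s}^{t,x,\bar{u}}$ for $s>t$ combined with Fubini, to conclude $E_{P_{t,x}^{\ast}}[\int_{t}^{T}I_{\mathcal{N}}(s,X_{s}^{t,x,\bar{u}})\,ds]=0$. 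Hence $\mathrm{Leb}(\Lambda)=0$, $P_{t,x}^{\ast}$-a.s., which is exactly what was needed.

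The main obstacle is precisely this last step: $P_{t,x}^{\ast}$ belongs to the non-dominated family $\mathcal{P}$, so $\gamma_{s}$ is only a measurable, adapted ellipticity between $\underline{\sigma}^{2}$ and $\bar{\sigma}^{2}$, and $\bar{u}_{s}$ is merely admissible, so one cannot quote smooth-coefficient diffusion theory for the existence of a density. What rescues the argument is that Krylov's estimate needs nothing beyond uniform ellipticity and boundedness of the coefficients, which is exactly the content of the hypotheses $\sigma^{2}(\cdot)\geq\beta$ and boundedness of $h,\sigma$, and that the reformulation through $\partial_{s}V$ has already removed the control from the target set, so the estimate applies to the fixed null set $\mathcal{N}$. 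The remaining verifications (Borel measurability of $\mathcal{N}$, discreteness of transversal zeros, and the bookkeeping that the identities of Parts (1)--(2) hold for a.e. $s$, $P_{t,x}^{\ast}$-a.s.) are routine.
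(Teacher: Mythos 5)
Your proposal is correct, and its reduction step is the same as the paper's, but the tool you use to finish is genuinely different. Both you and the paper use (\ref{ee3-8}) and (\ref{eee3-3}) to show that the only problematic times are those at which the state process sits in the control-free set $\{(s,x):\partial_{s}V(s,x)=0,\ \partial_{sx}^{2}V(s,x)\neq0\}$ (the paper's $A_{s}$ is exactly the $s$-slice of your $\mathcal{N}$), and both exploit that transversal zeros of $x\mapsto\partial_{s}V(s,x)$ are isolated. The divergence is in how this set is shown to be negligible for the state process: the paper works at each \emph{fixed} time $s$, notes that $A_{s}$ is countable, and invokes Theorem 3.7 of \cite{HWZ} (a no-atom property of nondegenerate $G$-SDEs) plus countable subadditivity of the capacity to get $c(\{X_{s}^{t,x,\bar{u}}\in A_{s}\})=0$, a quasi-sure statement uniform over all of $\mathcal{P}$; you instead work in \emph{space-time}, observe via Tonelli that $\mathcal{N}$ is Lebesgue-null in $[t,T]\times\mathbb{R}$, and apply Krylov's $L^{d+1}$ occupation-density estimate under the single fixed measure $P_{t,x}^{\ast}$, which is legitimate because under $P_{t,x}^{\ast}$ one can write $B=\int\sqrt{\gamma_{s}}\,dW_{s}$ for a $P_{t,x}^{\ast}$-Brownian motion $W$ (since $\gamma_{s}\geq\underline{\sigma}^{2}>0$), making $X^{t,x,\bar{u}}$ an It\^{o} process with bounded, merely adapted coefficients and uniformly elliptic diffusion $\sigma^{2}(\cdot)\gamma_{s}\geq\beta\underline{\sigma}^{2}$ --- exactly the setting Krylov's estimate was designed for. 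Your route buys independence from $G$-framework-specific machinery and needs only classical stochastic analysis under one measure, which suffices since (\ref{eee3-4}) is a statement per fixed $P_{t,x}^{\ast}\in\mathcal{\tilde{P}}_{t,x}^{\ast}$; the paper's route buys a stronger capacity-zero (quasi-sure) conclusion and is shorter given the citation. One caveat: your parenthetical claim that Krylov's estimate is ``equivalently'' the absolute continuity of the fixed-time marginal law of $X_{s}^{t,x,\bar{u}}$ is not accurate --- for uniformly elliptic It\^{o} processes with only measurable coefficients the fixed-time marginals can be singular (Fabes--Kenig type examples); it is the space-time occupation measure that Krylov's estimate controls, and that is what your main argument actually uses, so the proof stands.
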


\begin{proof}
We only need to show that, for a.e. $s\in \lbrack t,T]$,%
\[
\partial_{sx}^{2}V(s,X_{s}^{t,x,\bar{u}})=-\frac{1}{2}\gamma_{s}\partial
_{x}F(s)\text{ }P_{t,x}^{\ast}\text{-a.s.}%
\]
Set%
\[
A_{s}=\{x\in \mathbb{R}:\partial_{s}V(s,x)=0,\text{ }\partial_{sx}%
^{2}V(s,x)\not =0\}.
\]
For each $x\in A_{s}$, it is easy to find a $\delta>0$ such that $\partial
_{s}V(s,x^{\prime})\not =0$ for $x^{\prime}\in(x-\delta,x)\cup(x,x+\delta)$,
which implies that $A_{s}$ is countable. By (\ref{eee3-3}), it is easy to
deduce that%
\[
\left \{  \partial_{sx}^{2}V(s,X_{s}^{t,x,\bar{u}})\not =-\frac{1}{2}\gamma
_{s}\partial_{x}F(s)\right \}  \subseteq \left \{  X_{s}^{t,x,\bar{u}}\in
A_{s}\right \}  .
\]
By Theorem 3.7 in \cite{HWZ}, we know that $c(\left \{  X_{s}^{t,x,\bar{u}}\in
A_{s}\right \}  )=0$, which implies $P_{t,x}^{\ast}(\left \{  X_{s}^{t,x,\bar
{u}}\in A_{s}\right \}  )=0$. Thus we obtain (\ref{eee3-4}) for any
$P_{t,x}^{\ast}\in \mathcal{\tilde{P}}_{t,x}^{\ast}$.
\end{proof}

Now we study the relationship between MP and DPP in the viscosity sense. We
recall the definition of the first-order super-jet and sub-jet of $V(\cdot)\in
C([0,T]\times \mathbb{R})$ in $x$ (see \cite{CIP}) : for each fixed
$(t,x)\in \lbrack0,T]\times \mathbb{R}$,%
\[
\left \{
\begin{array}
[c]{rl}%
D_{x}^{1,+}V(t,x)= & \{a\in \mathbb{R}:V(t,x^{\prime})\leq V(t,x)+a(x^{\prime
}-x)+o(|x^{\prime}-x|)\text{ as }x^{\prime}\rightarrow x\},\\
D_{x}^{1,-}V(t,x)= & \{a\in \mathbb{R}:V(t,x^{\prime})\geq V(t,x)+a(x^{\prime
}-x)+o(|x^{\prime}-x|)\text{ as }x^{\prime}\rightarrow x\}.
\end{array}
\right.
\]
For simplicity, the constant $C>0$ will change from line to line in the
following, and we only consider $D_{x}^{1,+}V(t,x)$ and $D_{x}^{1,-}V(t,x)$.

\begin{theorem}
\label{MP-DPP-2}Suppose that Assumptions (H1) and (H2) hold. Let $\bar
{u}(\cdot)\in \mathcal{U}^{t}[t,T]$ be an optimal control for the control
problem (\ref{e2-3}) and let $(X^{t,x,\bar{u}},Y^{t,x,\bar{u}},Z^{t,x,\bar{u}%
},K^{t,x,\bar{u}})$ be the corresponding trajectory. Then we have%
\[
D_{x}^{1,-}V(t,x)\subseteq \lbrack \tilde{p}_{t},\bar{p}_{t}],
\]
where $\bar{p}_{t}=\sup_{P^{x}\in \mathcal{P}_{t,x}^{\ast}}p_{t}^{P^{x}}$,
$\tilde{p}_{t}=\inf_{P^{x}\in \mathcal{P}_{t,x}^{\ast}}p_{t}^{P^{x}}$,
$\mathcal{P}_{t,x}^{\ast}$ is defined in (\ref{e2-3-1}), $(p^{P^{x}}%
(\cdot),q^{P^{x}}(\cdot),N^{P^{x}}(\cdot))$ is the solution of the adjoint
equation (\ref{e2-4}) under $P^{x}$. Moreover, if $\bar{p}_{t}=\tilde{p}_{t}$,
then
\[
\{ \bar{p}_{t}\} \subseteq D_{x}^{1,+}V(t,x).
\]

\end{theorem}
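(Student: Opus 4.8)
The plan is to relate the sub-jet of $V(\cdot)$ at $(t,x)$ to the first-order behaviour of the cost $Y_{t}^{t,x^{\prime},\bar{u}}$ under perturbation of the initial state, reading off the initial value $p_{t}^{P}$ of the adjoint process as the slope of an auxiliary classical BSDE. First I would record two one-sided facts valid for every $P\in\mathcal{P}$. Writing $\bar{Y}^{x^{\prime},P}$ for the time-$t$ value of the classical BSDE obtained from the second line of (\ref{e2-1}) with control $\bar{u}$, initial state $x^{\prime}$ and quadratic variation frozen under $P$, the fact that $K^{t,x^{\prime},\bar{u}}$ is a non-increasing $G$-martingale with $K_{t}^{t,x^{\prime},\bar{u}}=0$ yields, by the comparison theorem for classical BSDEs (Theorem 2.2 in \cite{EPQ}), the inequality $Y_{t}^{t,x^{\prime},\bar{u}}\geq\bar{Y}_{t}^{x^{\prime},P}$, with equality once $E_{P}[K_{T}^{t,x^{\prime},\bar{u}}]=0$. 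In particular $V(t,x)=Y_{t}^{t,x,\bar{u}}\geq\bar{Y}_{t}^{x,P}$ for all $P$, while for each $x^{\prime}$ the set $\mathcal{Q}_{x^{\prime}}:=\{P\in\mathcal{P}:E_{P}[K_{T}^{t,x^{\prime},\bar{u}}]=0\}$ is nonempty (the same weak-compactness argument used for $\mathcal{\tilde{P}}_{t,x}^{\ast}$ in (\ref{eee3-1}), via Theorem \ref{re-1} and $\mathbb{\hat{E}}[K_{T}^{t,x^{\prime},\bar{u}}]=0$), so that $Y_{t}^{t,x^{\prime},\bar{u}}=\bar{Y}_{t}^{x^{\prime},P^{x^{\prime}}}$ for any $P^{x^{\prime}}\in\mathcal{Q}_{x^{\prime}}$.

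Second, for a fixed $P$ the map $\xi\mapsto\bar{Y}_{t}^{\xi,P}$ is differentiable, with derivative equal to the initial value of the adjoint process. Differentiating the forward equation gives the variational SDE for $\nabla X$, differentiating the classical BSDE gives the variational BSDE for $(\nabla\bar{Y},\nabla\bar{Z})$ (which under $P$ may carry a martingale part orthogonal to $B$); applying It\^{o}'s formula under $P$ to $D_{s}:=\nabla\bar{Y}_{s}-p_{s}^{P}\nabla X_{s}$ and using the adjoint equation (\ref{e2-4}) shows that $D$ solves a linear BSDE with zero terminal value and no constant driver, so by uniqueness $D\equiv0$ and $\tfrac{d}{d\xi}\bar{Y}_{t}^{\xi,P}=p_{t}^{P}(\xi)$, the adjoint linearised along $X^{t,\xi,\bar{u}}$. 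Combining with the first step, for any $P^{x^{\prime}}\in\mathcal{Q}_{x^{\prime}}$,
\[
Y_{t}^{t,x^{\prime},\bar{u}}-V(t,x)\leq\bar{Y}_{t}^{x^{\prime},P^{x^{\prime}}}-\bar{Y}_{t}^{x,P^{x^{\prime}}}=\int_{x}^{x^{\prime}}p_{t}^{P^{x^{\prime}}}(\xi)\,d\xi .
\]

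Third, I would let $x^{\prime}\to x$. Take $a\in D_{x}^{1,-}V(t,x)$, so $a(x^{\prime}-x)+o(|x^{\prime}-x|)\leq V(t,x^{\prime})-V(t,x)$, and recall $V(t,x^{\prime})\leq Y_{t}^{t,x^{\prime},\bar{u}}$. Inserting the displayed bound and dividing by $x^{\prime}-x$ (positive for the right limit, negative for the left, reversing the inequality) reduces matters to $\lim_{x^{\prime}\to x}\tfrac{1}{x^{\prime}-x}\int_{x}^{x^{\prime}}p_{t}^{P^{x^{\prime}}}(\xi)\,d\xi$. By weak compactness of $\mathcal{P}$ and $L_{G}^{1}$-stability of $K^{t,x^{\prime},\bar{u}}$ in $x^{\prime}$, any family $P^{x^{\prime}}\in\mathcal{Q}_{x^{\prime}}$ has a weak cluster point $P^{0}$ with $E_{P^{0}}[K_{T}^{t,x,\bar{u}}]=0$, i.e. $P^{0}\in\mathcal{P}_{t,x}^{\ast}$ from (\ref{e2-3-1}); with joint continuity of $(\xi,P)\mapsto p_{t}^{P}(\xi)$ the averaged integral tends to $p_{t}^{P^{0}}(x)=p_{t}^{P^{0}}\in[\tilde{p}_{t},\bar{p}_{t}]$. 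The right limit then gives $a\leq\bar{p}_{t}$ and the left limit $a\geq\tilde{p}_{t}$, proving $D_{x}^{1,-}V(t,x)\subseteq[\tilde{p}_{t},\bar{p}_{t}]$. For the final assertion, if $\bar{p}_{t}=\tilde{p}_{t}=:p^{\ast}$ then every cluster point has slope $p^{\ast}$, so the displayed bound specialises to $Y_{t}^{t,x^{\prime},\bar{u}}-V(t,x)\leq p^{\ast}(x^{\prime}-x)+o(|x^{\prime}-x|)$ on both sides; since $V(t,x^{\prime})\leq Y_{t}^{t,x^{\prime},\bar{u}}$ this is precisely $p^{\ast}\in D_{x}^{1,+}V(t,x)$.

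The main obstacle is this limiting step: one must show the reference measures $P^{x^{\prime}}\in\mathcal{Q}_{x^{\prime}}$ cluster weakly inside $\mathcal{P}_{t,x}^{\ast}$ and that $p_{t}^{P}(\xi)$ is continuous jointly in the perturbed state $\xi$ and in $P$ under weak convergence. This is delicate because $P$ enters (\ref{e2-4}) both through the generator (via $\gamma_{s}$ in $d\langle B\rangle_{s}$) and through the orthogonal martingale $N$ and the martingale representation under $P$, so controlling $p_{t}^{P}$ needs uniform integrability and quasi-continuity estimates for $\{K_{T}^{t,x^{\prime},\bar{u}}\}$ together with stability of linear BSDEs as the reference measure ranges over the weakly compact set $\mathcal{P}$. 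One cannot fix a single $P^{x}\in\mathcal{P}_{t,x}^{\ast}$ instead: the first-order contribution of $K^{t,x^{\prime},\bar{u}}$ under a fixed $P^{x}$ is genuinely $O(|x^{\prime}-x|)$ and measure-dependent, which is exactly why the slope spreads into the interval $[\tilde{p}_{t},\bar{p}_{t}]$ and forces $P^{x^{\prime}}$ to be chosen adaptively with $x^{\prime}$.
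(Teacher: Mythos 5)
Your architecture is essentially the paper's: you perturb the initial state, choose the reference measure $P^{x^{\prime}}$ adaptively so that $K^{t,x^{\prime},\bar{u}}$ vanishes under it (the paper's $\mathcal{\tilde{P}}_{t,x^{\prime}}^{\ast}$ is your $\mathcal{Q}_{x^{\prime}}$), bound $Y_{t}^{t,x^{\prime},\bar{u}}-Y_{t}^{t,x,\bar{u}}$ by an adjoint slope via comparison and duality, and pass to weak cluster points of $\{P^{x^{\prime}}\}$ inside $\mathcal{P}_{t,x}^{\ast}$. But the step you yourself flag as ``the main obstacle'' --- continuity of $p_{t}^{P}$ in $P$ under weak convergence (and jointly in $\xi$ for your averaged integral) --- is not an obstacle you defer; it is the actual mathematical content of the theorem, and you never supply it. Without it, nothing forces $\limsup_{x^{\prime}\downarrow x}p_{t}^{P^{x^{\prime}}}\leq\bar{p}_{t}$: the cluster point $P^{0}\in\mathcal{P}_{t,x}^{\ast}$ alone does not relate the numbers $p_{t}^{P^{x^{\prime}}}$ to $p_{t}^{P^{0}}$, so your proof of $a\leq\bar{p}_{t}$ (and the whole super-jet assertion) has a hole precisely where it matters. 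The paper closes this gap with a specific device you are missing: since the adjoint equation (\ref{e2-4}) is \emph{linear}, Proposition 2.2 in \cite{EPQ} gives the closed-form representation
\begin{equation*}
p_{t}^{P}=E_{P}\!\left[\lambda_{T}\Phi^{\prime}(X_{T}^{t,x,\bar{u}})+\int_{t}^{T}\lambda_{s}g_{x}(s)\,d\langle B\rangle_{s}\right],\qquad
\lambda_{s}=\exp\!\left(\int_{t}^{s}\beta_{r}dB_{r}+\int_{t}^{s}\bigl(\alpha_{r}-\tfrac{1}{2}|\beta_{r}|^{2}\bigr)d\langle B\rangle_{r}\right),
\end{equation*}
with $\alpha_{r}=h_{x}(r)+g_{y}(r)+g_{z}(r)\sigma_{x}(r)$, $\beta_{r}=g_{z}(r)+\sigma_{x}(r)$. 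The random variable inside $E_{P}[\cdot]$ is a single, $P$-independent element of $L_{G}^{1}(\Omega_{T}^{t})$, so Lemma 29 in \cite{DHP11} applies directly and yields $p_{t}^{P^{x_{i}^{n}}}\rightarrow p_{t}^{P^{\ast}}$ along weakly convergent subsequences. No stability theory for BSDEs over the family $\mathcal{P}$, and no handling of the $P$-dependent orthogonal martingale $N$, is needed at all --- the dependence on $P$ is reduced to evaluating one fixed quasi-continuous functional.

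A secondary structural difference explains why the paper never needs your joint continuity in $\xi$: instead of differentiating the frozen-measure flow $\xi\mapsto\bar{Y}_{t}^{\xi,P}$ and integrating $p_{t}^{P^{x^{\prime}}}(\xi)$ over $[x,x^{\prime}]$, the paper Taylor-expands the pair of trajectories around the \emph{fixed} optimal one $X^{t,x,\bar{u}}$ (its Step 3, with remainder coefficients $\varepsilon_{1},\ldots,\varepsilon_{6}$) and shows each remainder term in the duality identity is $o(|x^{\prime}-x|)$ using moment estimates both under $\mathbb{\hat{E}}$ and under the reference measures (its Steps 1--2, e.g.\ $E_{P^{x^{\prime}}}[(\int_{t}^{T}|\hat{Z}_{s}|^{2}ds)^{p/2}+|K_{T}^{t,x,\bar{u}}|^{p}]\leq C|x^{\prime}-x|^{p}$). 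Consequently only the number $p_{t}^{P^{x^{\prime}}}$ --- the adjoint at the single point $x$, under the varying measure --- ever appears, which is exactly the quantity the representation formula above controls. If you want to salvage your FTC variant, you would additionally have to prove equicontinuity of $\xi\mapsto p_{t}^{P}(\xi)$ at $x$ uniformly over $P\in\mathcal{P}$ (doable from BSDE estimates whose constants depend only on $T,\underline{\sigma},\bar{\sigma},L$), but you would still need the representation-formula argument for the convergence in $P$; as written, the proposal asserts the conclusion of the hard step rather than proving it.
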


\begin{proof}
For each $x^{\prime}\in \mathbb{R}$ and $s\in \lbrack t,T]$, set%
\[%
\begin{array}
[c]{ll}%
\hat{X}_{s}=X_{s}^{t,x^{\prime},\bar{u}}-X_{s}^{t,x,\bar{u}}, & \hat{Y}%
_{s}=Y_{s}^{t,x^{\prime},\bar{u}}-Y_{s}^{t,x,\bar{u}},\\
\hat{Z}_{s}=Z_{s}^{t,x^{\prime},\bar{u}}-Z_{s}^{t,x,\bar{u}}, & \hat{K}%
_{s}=K_{s}^{t,x^{\prime},\bar{u}}-K_{s}^{t,x,\bar{u}}.
\end{array}
\]
For simplicity, let $x$ be a fixed constant and $x^{\prime}\in \lbrack
x-1,x+1]$. The proof is divided into six steps.

Step 1: Estimates for $\hat{X}$, $\hat{Y}$, $\hat{Z}$ and $\hat{K}$ under
$\mathbb{\hat{E}}[\cdot]$.

By the estimates of $G$-SDEs (see \cite{HJPS, P2019}), we have, for each
$p\geq2$,%
\begin{equation}
\mathbb{\hat{E}}\left[  \underset{s\in \lbrack t,T]}{\sup}|\hat{X}_{s}%
|^{p}\right]  \leq C|x^{\prime}-x|^{p}\text{ and }\mathbb{\hat{E}}\left[
\underset{s\in \lbrack t,T]}{\sup}|X_{s}^{t,x,\bar{u}}|^{p}\right]  \leq C,
\label{ee3-10}%
\end{equation}
where the constant $C>0$ depends on $T$, $\bar{\sigma}^{2}$, $L$ and $p$. It
follows from Proposition 5.1 in \cite{HJPS1} that, for each $p\geq2$,
\[
|\hat{Y}_{s}|^{p}\leq C\mathbb{\hat{E}}_{s}\left[  \left \vert \Phi
(X_{T}^{t,x^{\prime},\bar{u}})-\Phi(X_{T}^{t,x,\bar{u}})\right \vert
^{p}+\left(  \int_{t}^{T}|\hat{g}_{r}|dr\right)  ^{p}\right]  ,
\]
where $C>0$ depends on $T$, $\bar{\sigma}^{2}$, $L$ and $p$, $\hat{g}%
_{r}=g(r,X_{r}^{t,x^{\prime},\bar{u}},Y_{r}^{t,x,\bar{u}},Z_{r}^{t,x,\bar{u}%
},\bar{u}_{r})-g(r,X_{r}^{t,x,\bar{u}},Y_{r}^{t,x,\bar{u}},Z_{r}^{t,x,\bar{u}%
},\bar{u}_{r})$. By Doob's inequality under $\mathbb{\hat{E}}[\cdot]$ (see
\cite{STZ, Song11}), we have%
\[
\mathbb{\hat{E}}\left[  \underset{s\in \lbrack t,T]}{\sup}|\hat{Y}_{s}%
|^{p}\right]  \leq C\left(  \mathbb{\hat{E}}\left[  \left \vert \Phi
(X_{T}^{t,x^{\prime},\bar{u}})-\Phi(X_{T}^{t,x,\bar{u}})\right \vert
^{p+1}+\left(  \int_{t}^{T}|\hat{g}_{r}|dr\right)  ^{p+1}\right]  \right)
^{p/(p+1)}.
\]
By (H2), (\ref{ee3-10}) and H\"{o}lder's inequality, we can easily obtain%
\begin{equation}
\mathbb{\hat{E}}\left[  \underset{s\in \lbrack t,T]}{\sup}|\hat{Y}_{s}%
|^{p}\right]  \leq C|x^{\prime}-x|^{p}, \label{ee3-11}%
\end{equation}
where $C>0$ depends on $T$, $\bar{\sigma}^{2}$, $L$ and $p$. By Proposition
3.8 in \cite{HJPS1}, we deduce%
\begin{equation}
\mathbb{\hat{E}}\left[  \left(  \int_{t}^{T}|\hat{Z}_{s}|^{2}ds\right)
^{p/2}+|\hat{K}_{T}|^{p}\right]  \leq C|x^{\prime}-x|^{p/2}\text{ for }p\geq2,
\label{ee3-12}%
\end{equation}
where $C>0$ depends on $T$, $\bar{\sigma}^{2}$, $\underline{\sigma}^{2}$, $L$
and $p$.

Step 2: Estimates for $\hat{Z}$ and $\hat{K}$ under $P\in \mathcal{P}$.

Set%
\[
\mathcal{\tilde{P}}_{t,x^{\prime}}^{\ast}=\{P\in \mathcal{P}:E_{P}%
[K_{T}^{t,x^{\prime},\bar{u}}]=0\}.
\]
For each fixed $P^{x^{\prime}}\in \mathcal{\tilde{P}}_{t,x^{\prime}}^{\ast}$,
applying Ito's formula to $|\hat{Y}_{s}|^{2}$ under $P^{x^{\prime}}$, we
obtain%
\[
|\hat{Y}_{t}|^{2}+\int_{t}^{T}|\hat{Z}_{s}|^{2}d\langle B\rangle_{s}=|\hat
{Y}_{T}|^{2}+2\int_{t}^{T}\hat{Y}_{s}\tilde{g}_{s}d\langle B\rangle_{s}%
-2\int_{t}^{T}\hat{Y}_{s}\hat{Z}_{s}dB_{s}+2\int_{t}^{T}\hat{Y}_{s}%
dK_{s}^{t,x,\bar{u}},
\]
where $\tilde{g}_{s}=g(s,X_{s}^{t,x^{\prime},\bar{u}},Y_{s}^{t,x^{\prime}%
,\bar{u}},Z_{s}^{t,x^{\prime},\bar{u}},\bar{u}_{s})-g(s,X_{s}^{t,x,\bar{u}%
},Y_{s}^{t,x,\bar{u}},Z_{s}^{t,x,\bar{u}},\bar{u}_{s})$. By the
Burkholder-Davis-Gundy inequality, (\ref{ee3-10}) and (\ref{ee3-11}), we
obtain%
\begin{equation}
E_{P^{x^{\prime}}}\left[  \left(  \int_{t}^{T}|\hat{Z}_{s}|^{2}ds\right)
^{p/2}\right]  \leq C|x-x^{\prime}|^{p}+CE_{P^{x^{\prime}}}\left[
|K_{T}^{t,x,\bar{u}}|^{p/2}\underset{s\in \lbrack t,T]}{\sup}|\hat{Y}%
_{s}|^{p/2}\right]  , \label{ee3-13}%
\end{equation}
where $C>0$ depends on $T$, $\bar{\sigma}^{2}$, $\underline{\sigma}^{2}$, $L$
and $p$. Since%
\[
-K_{T}^{t,x,\bar{u}}=\hat{Y}_{T}-\hat{Y}_{t}+\int_{t}^{T}\tilde{g}_{s}d\langle
B\rangle_{s}-\int_{t}^{T}\hat{Z}_{s}dB_{s}\text{ }P^{x^{\prime}}\text{-a.s.,}%
\]
we get%
\begin{equation}
E_{P^{x^{\prime}}}\left[  |K_{T}^{t,x,\bar{u}}|^{p}\right]  \leq
C|x-x^{\prime}|^{p}+CE_{P^{x^{\prime}}}\left[  \left(  \int_{t}^{T}|\hat
{Z}_{s}|^{2}ds\right)  ^{p/2}\right]  , \label{ee3-14}%
\end{equation}
where $C>0$ depends on $T$, $\bar{\sigma}^{2}$, $\underline{\sigma}^{2}$, $L$
and $p$. Combining (\ref{ee3-13}) and (\ref{ee3-14}), we get%
\begin{equation}
E_{P^{x^{\prime}}}\left[  \left(  \int_{t}^{T}|\hat{Z}_{s}|^{2}ds\right)
^{p/2}+|K_{T}^{t,x,\bar{u}}|^{p}\right]  \leq C|x^{\prime}-x|^{p},
\label{ee3-15}%
\end{equation}
where $C>0$ depends on $T$, $\bar{\sigma}^{2}$, $\underline{\sigma}^{2}$, $L$
and $p$. By similar method, we can obtain that, for each fixed $P^{x}%
\in \mathcal{P}_{t,x}^{\ast}$ and $p\geq2$,%
\begin{equation}
E_{P^{x}}\left[  \left(  \int_{t}^{T}|\hat{Z}_{s}|^{2}ds\right)  ^{p/2}%
+|K_{T}^{t,x^{\prime},\bar{u}}|^{p}\right]  \leq C|x^{\prime}-x|^{p},
\label{ee3-16}%
\end{equation}
where $C>0$ depends on $T$, $\bar{\sigma}^{2}$, $\underline{\sigma}^{2}$, $L$
and $p$.

Step 3: Variation of $\hat{X}$ and $\hat{Y}$.

Rewrite the equation of $(\hat{X},\hat{Y},\hat{Z},\hat{K})$ as follows:%
\begin{equation}
\left \{
\begin{array}
[c]{rl}%
d\hat{X}_{s}= & [h_{x}(s)+\varepsilon_{1}(s)]\hat{X}_{s}d\langle B\rangle
_{s}+[\sigma_{x}(s)+\varepsilon_{2}(s)]\hat{X}_{s}dB_{s},\\
d\hat{Y}_{s}= & -\{[g_{x}(s)+\varepsilon_{3}(s)]\hat{X}_{s}+[g_{y}%
(s)+\varepsilon_{4}(s)]\hat{Y}_{s}+[g_{z}(s)+\varepsilon_{5}(s)]\hat{Z}%
_{s}\}d\langle B\rangle_{s}\\
& +\hat{Z}_{s}dB_{s}+dK_{s}^{t,x^{\prime},\bar{u}}-dK_{s}^{t,x,\bar{u}},\\
\hat{X}_{t}= & x^{\prime}-x,\text{ }\hat{Y}_{T}=[\Phi^{\prime}(X_{T}%
^{t,x,\bar{u}})+\varepsilon_{6}(T)]\hat{X}_{T},\text{ }s\in \lbrack t,T],
\end{array}
\right.  \label{ee3-17}%
\end{equation}
where%
\[%
\begin{array}
[c]{rl}%
\varepsilon_{1}(s)= & \int_{0}^{1}[h_{x}(s,X_{s}^{t,x,\bar{u}}+\alpha \hat
{X}_{s},\bar{u}_{s})-h_{x}(s)]d\alpha,\\
\varepsilon_{2}(s)= & \int_{0}^{1}[\sigma_{x}(s,X_{s}^{t,x,\bar{u}}+\alpha
\hat{X}_{s},\bar{u}_{s})-\sigma_{x}(s)]d\alpha,\\
\varepsilon_{3}(s)= & \int_{0}^{1}[g_{x}(s,X_{s}^{t,x,\bar{u}}+\alpha \hat
{X}_{s},Y_{s}^{t,x,\bar{u}}+\alpha \hat{Y}_{s},Z_{s}^{t,x,\bar{u}}+\alpha
\hat{Z}_{s},\bar{u}_{s})-g_{x}(s)]d\alpha,\\
\varepsilon_{4}(s)= & \int_{0}^{1}[g_{y}(s,X_{s}^{t,x,\bar{u}}+\alpha \hat
{X}_{s},Y_{s}^{t,x,\bar{u}}+\alpha \hat{Y}_{s},Z_{s}^{t,x,\bar{u}}+\alpha
\hat{Z}_{s},\bar{u}_{s})-g_{y}(s)]d\alpha,\\
\varepsilon_{5}(s)= & \int_{0}^{1}[g_{z}(s,X_{s}^{t,x,\bar{u}}+\alpha \hat
{X}_{s},Y_{s}^{t,x,\bar{u}}+\alpha \hat{Y}_{s},Z_{s}^{t,x,\bar{u}}+\alpha
\hat{Z}_{s},\bar{u}_{s})-g_{z}(s)]d\alpha,\\
\varepsilon_{6}(T)= & \int_{0}^{1}[\Phi^{\prime}(X_{T}^{t,x,\bar{u}}%
+\alpha \hat{X}_{T})-\Phi^{\prime}(X_{T}^{t,x,\bar{u}})]d\alpha.
\end{array}
\]
Let $(l_{s})_{s\in \lbrack t,T]}$ be the solution of the following $G$-SDE:%
\[
dl_{s}=g_{y}(s)l_{s}d\langle B\rangle_{s}+g_{z}(s)l_{s}dB_{s},\text{ }%
l_{t}=1.
\]
For each given $P^{x^{\prime}}\in \mathcal{\tilde{P}}_{t,x^{\prime}}^{\ast}$,
the solution of the adjoint equation (\ref{e2-4}) is denoted by
$(p^{P^{x^{\prime}}}(\cdot),q^{P^{x^{\prime}}}(\cdot),N^{P^{x^{\prime}}}%
(\cdot))$. Applying Ito's formula to $l_{s}(\hat{Y}_{s}-p_{s}^{P^{x^{\prime}}%
}\hat{X}_{s})$ under $P^{x^{\prime}}$ and noting that $dK_{s}^{t,x^{\prime
},\bar{u}}=0$, we obtain%
\begin{equation}%
\begin{array}
[c]{rl}%
\hat{Y}_{t}-p_{t}^{P^{x^{\prime}}}\hat{X}_{t}= & E_{P^{x^{\prime}}}\left[
\varepsilon_{6}(T)l_{T}\hat{X}_{T}+\int_{t}^{T}l_{s}dK_{s}^{t,x,\bar{u}}%
+\int_{t}^{T}\left \{  \varepsilon_{4}(s)l_{s}\hat{Y}_{s}+\varepsilon
_{5}(s)l_{s}\hat{Z}_{s}\right.  \right. \\
& \left.  \left.  +\left[  \varepsilon_{1}(s)p_{s}^{P^{x^{\prime}}%
}+\varepsilon_{2}(s)q_{s}^{P^{x^{\prime}}}+\varepsilon_{2}(s)g_{z}%
(s)p_{s}^{P^{x^{\prime}}}+\varepsilon_{3}(s)\right]  l_{s}\hat{X}_{s}\right \}
d\langle B\rangle_{s}\right]  .
\end{array}
\label{ee3-18}%
\end{equation}

Step 4: Estimates of every terms in the right side of (\ref{ee3-18}).

By $l_{s}\geq0$ and $dK_{s}^{t,x,\bar{u}}\leq0$, we get $\int_{t}^{T}%
l_{s}dK_{s}^{t,x,\bar{u}}\leq0$. It follows from H\"{o}lder's inequality,
$d\langle B\rangle_{s}\leq \bar{\sigma}^{2}ds$ and (\ref{ee3-15}) that
\begin{align*}
&  \left \vert E_{P^{x^{\prime}}}\left[  \int_{t}^{T}\varepsilon_{5}%
(s)l_{s}\hat{Z}_{s}d\langle B\rangle_{s}\right]  \right \vert \\
&  \leq C\left(  E_{P^{x^{\prime}}}\left[  \int_{t}^{T}|\hat{Z}_{s}%
|^{2}ds\right]  \right)  ^{1/2}\left(  E_{P^{x^{\prime}}}\left[  \int_{t}%
^{T}|\varepsilon_{5}(s)|^{4}ds\right]  \right)  ^{1/4}\left(  E_{P^{x^{\prime
}}}\left[  \int_{t}^{T}|l_{s}|^{4}ds\right]  \right)  ^{1/4}\\
&  \leq C\left(  \mathbb{\hat{E}}\left[  \int_{t}^{T}|\varepsilon_{5}%
(s)|^{4}ds\right]  \right)  ^{1/4}\left(  \mathbb{\hat{E}}\left[
\underset{s\in \lbrack t,T]}{\sup}|l_{s}|^{4}\right]  \right)  ^{1/4}%
|x^{\prime}-x|,
\end{align*}
where $C>0$ depends on $T$, $\bar{\sigma}^{2}$, $\underline{\sigma}^{2}$ and
$L$. Similar to (\ref{ee3-10}), we know
\[
\mathbb{\hat{E}}\left[  \underset{s\in \lbrack t,T]}{\sup}|l_{s}|^{4}\right]
\leq C,
\]
where $C>0$ depends on $T$, $\bar{\sigma}^{2}$ and $L$. Now, we prove%
\begin{equation}
\mathbb{\hat{E}}\left[  \int_{t}^{T}|\varepsilon_{5}(s)|^{4}ds\right]
\rightarrow0\text{ as }x^{\prime}\rightarrow x. \label{ee3-19}%
\end{equation}
For each given $N>0$ and $\delta \in \lbrack0,1]$, set%
\[%
\begin{array}
[c]{rl}%
\bar{\omega}_{N}(\delta)= & \sup \left \{  g_{z}(s,x+x^{\prime},y+y^{\prime
},z+z^{\prime},u)-g_{z}(s,x,y,z,u):|x|\leq N,\right. \\
& \left.  |y|\leq N,\text{ }|z|\leq N,\text{ }u\in U,\text{ }|x^{\prime}%
|\leq \delta,\text{ }|y^{\prime}|\leq \delta,\text{ }|z^{\prime}|\leq
\delta \right \}  .
\end{array}
\]
By (H1), we have $\bar{\omega}_{N}(\delta)\rightarrow0$ as $\delta
\rightarrow0$ for each fixed $N>0$. Thus%
\[%
\begin{array}
[c]{rl}%
|\varepsilon_{5}(s)|\leq & \bar{\omega}_{N}(\delta)+\frac{2L}{\sqrt{\delta}%
}\left(  \sqrt{|\hat{X}_{s}|}+\sqrt{|\hat{Y}_{s}|}+\sqrt{|\hat{Z}_{s}|}\right)
\\
& +\frac{2L}{\sqrt{N}}\left(  \sqrt{|X_{s}^{t,x,\bar{u}}|}+\sqrt
{|Y_{s}^{t,x,\bar{u}}|}+\sqrt{|Z_{s}^{t,x,\bar{u}}|}\right)  .
\end{array}
\]
By (\ref{ee3-10}), (\ref{ee3-11}) and (\ref{ee3-12}), we obtain%
\[
\mathbb{\hat{E}}\left[  \int_{t}^{T}|\varepsilon_{5}(s)|^{4}ds\right]  \leq
C\left(  |\bar{\omega}_{N}(\delta)|^{4}+\frac{1}{\delta^{2}}|x^{\prime
}-x|+\frac{1}{N^{2}}\right)  ,
\]
where $C>0$ depends on $T$, $\bar{\sigma}^{2}$, $\underline{\sigma}^{2}$ and
$L$. Thus we get%
\[
\underset{x^{\prime}\rightarrow x}{\lim \sup}\mathbb{\hat{E}}\left[  \int
_{t}^{T}|\varepsilon_{5}(s)|^{4}ds\right]  \leq C\left(  |\bar{\omega}%
_{N}(\delta)|^{4}+\frac{1}{N^{2}}\right)  ,
\]
which implies (\ref{ee3-19}) by letting $\delta \rightarrow0$ and then
$N\rightarrow \infty$. Then we obtain%
\begin{equation}
E_{P^{x^{\prime}}}\left[  \int_{t}^{T}\varepsilon_{5}(s)l_{s}\hat{Z}%
_{s}d\langle B\rangle_{s}\right]  =o(|x^{\prime}-x|). \label{ee3-20}%
\end{equation}
Similar to the proof of (\ref{ee3-20}), we can prove that the other terms in
the right side of (\ref{ee3-18}) are $o(|x^{\prime}-x|)$. Note that $\hat
{X}_{t}=x^{\prime}-x$, then we get%
\begin{equation}
\hat{Y}_{t}\leq p_{t}^{P^{x^{\prime}}}(x^{\prime}-x)+o(|x^{\prime}-x|)\text{
for }P^{x^{\prime}}\in \mathcal{\tilde{P}}_{t,x^{\prime}}^{\ast}.
\label{ee3-21}%
\end{equation}
Similar to the proof of (\ref{ee3-21}), we can deduce%
\begin{equation}
\hat{Y}_{t}\geq p_{t}^{P^{x}}(x^{\prime}-x)+o(|x^{\prime}-x|)\text{ for }%
P^{x}\in \mathcal{P}_{t,x}^{\ast}. \label{ee3-22}%
\end{equation}

Step 5: First order derivative for $\hat{Y}_{t}$.

For $x^{\prime}>x$, we get by (\ref{ee3-22}) that%
\begin{equation}
\underset{x^{\prime}\downarrow x}{\lim \inf}\frac{\hat{Y}_{t}}{x^{\prime}%
-x}=\underset{x^{\prime}\downarrow x}{\lim \inf}\frac{Y_{t}^{t,x^{\prime}%
,\bar{u}}-Y_{t}^{t,x,\bar{u}}}{x^{\prime}-x}\geq \sup_{P^{x}\in \mathcal{P}%
_{t,x}^{\ast}}p_{t}^{P^{x}}=\bar{p}_{t}. \label{ee3-23}%
\end{equation}
It follows from (\ref{ee3-11}) that $\hat{Y}_{t}(x^{\prime}-x)^{-1}$ is
bounded. Thus we can choose $x^{n}>x$, $n\geq1$, such that $x^{n}\downarrow x$
and%
\begin{equation}
\underset{x^{\prime}\downarrow x}{\lim \sup}\frac{Y_{t}^{t,x^{\prime},\bar{u}%
}-Y_{t}^{t,x,\bar{u}}}{x^{\prime}-x}=\lim_{n\rightarrow \infty}\frac
{Y_{t}^{t,x^{n},\bar{u}}-Y_{t}^{t,x,\bar{u}}}{x^{n}-x}<\infty. \label{ee3-26}%
\end{equation}
Since $P^{x^{n}}\in \mathcal{\tilde{P}}_{t,x^{n}}^{\ast}\subset \mathcal{P}$ and
$\mathcal{P}$\ is weakly compact, there exists a subsequence of $\{x^{n}%
:n\geq1\}$, denoted by $\{x_{i}^{n}:i\geq1\}$, such that $P^{x_{i}^{n}%
}\rightarrow P^{\ast}\in \mathcal{P}$ weakly as $i\rightarrow \infty$. By Lemma
29 in \cite{DHP11}, we have%
\begin{equation}
E_{P^{x_{i}^{n}}}\left[  K_{T}^{t,x,\bar{u}}\right]  \rightarrow E_{P^{\ast}%
}\left[  K_{T}^{t,x,\bar{u}}\right]  \text{ as }i\rightarrow \infty.
\label{ee3-24}%
\end{equation}
Noting that $K_{T}^{t,x_{i}^{n},\bar{u}}=0$ $P^{x_{i}^{n}}$-a.s., we get
\begin{equation}
-E_{P^{x_{i}^{n}}}\left[  K_{T}^{t,x,\bar{u}}\right]  =E_{P^{x_{i}^{n}}%
}\left[  |K_{T}^{t,x_{i}^{n},\bar{u}}-K_{T}^{t,x,\bar{u}}|\right]
\leq \mathbb{\hat{E}}\left[  |K_{T}^{t,x_{i}^{n},\bar{u}}-K_{T}^{t,x,\bar{u}%
}|\right]  . \label{ee3-25}%
\end{equation}
By (\ref{ee3-12}), (\ref{ee3-24}) and (\ref{ee3-25}), we deduce $E_{P^{\ast}%
}\left[  K_{T}^{t,x,\bar{u}}\right]  =0$, which implies $P^{\ast}%
\in \mathcal{P}_{t,x}^{\ast}$. Since the adjoint equation (\ref{e2-4}) is a
linear BSDE, by Proposition 2.2 in \cite{EPQ}, we obtain%
\[
p_{t}^{P^{x_{i}^{n}}}=E_{P^{x_{i}^{n}}}\left[  \lambda_{T}\Phi^{\prime}%
(X_{T}^{t,x,\bar{u}})+\int_{t}^{T}\lambda_{s}g_{x}(s)d\langle B\rangle
_{s}\right]  ,
\]
where%
\[
\lambda_{s}=\exp \left(  \int_{t}^{s}\beta_{r}dB_{r}+\int_{t}^{s}(\alpha
_{r}-\frac{1}{2}|\beta_{r}|^{2})d\langle B\rangle_{r}\right)  ,
\]%
\[
\alpha_{r}=h_{x}(r)+g_{y}(r)+g_{z}(r)\sigma_{x}(r),\text{ }\beta_{r}%
=g_{z}(r)+\sigma_{x}(r).
\]
Since $\lambda_{T}\Phi^{\prime}(X_{T}^{t,x,\bar{u}})+\int_{t}^{T}\lambda
_{s}g_{x}(s)d\langle B\rangle_{s}\in L_{G}^{1}(\Omega_{T}^{t})$, we get
\begin{equation}
p_{t}^{P^{x_{i}^{n}}}\rightarrow p_{t}^{P^{\ast}}\text{ as }i\rightarrow
\infty \label{ee3-27}%
\end{equation}
by Lemma 29 in \cite{DHP11}. By (\ref{ee3-23}), (\ref{ee3-26}) and
(\ref{ee3-27}), we obtain%
\[
\lim_{x^{\prime}\downarrow x}\frac{Y_{t}^{t,x^{\prime},\bar{u}}-Y_{t}%
^{t,x,\bar{u}}}{x^{\prime}-x}=\bar{p}_{t}=\sup_{P^{x}\in \mathcal{P}%
_{t,x}^{\ast}}p_{t}^{P^{x}},
\]
which implies%
\begin{equation}
Y_{t}^{t,x^{\prime},\bar{u}}-Y_{t}^{t,x,\bar{u}}=\bar{p}_{t}(x^{\prime
}-x)+o(|x^{\prime}-x|)\text{ for }x^{\prime}>x\text{.} \label{ee3-28}%
\end{equation}
Similar to (\ref{ee3-28}), we can obtain%
\begin{equation}
Y_{t}^{t,x^{\prime},\bar{u}}-Y_{t}^{t,x,\bar{u}}=\tilde{p}_{t}(x^{\prime
}-x)+o(|x^{\prime}-x|)\text{ for }x^{\prime}<x\text{,} \label{ee3-29}%
\end{equation}
where $\tilde{p}_{t}=\inf_{P^{x}\in \mathcal{P}_{t,x}^{\ast}}p_{t}^{P^{x}}$.

Step 6: $D_{x}^{1,-}V(t,x)$ and $D_{x}^{1,+}V(t,x)$.

Noting that $V(t,x)=Y_{t}^{t,x,\bar{u}}$ and $V(t,x^{\prime})\leq
Y_{t}^{t,x^{\prime},\bar{u}}$, we have%
\begin{equation}
V(t,x^{\prime})-V(t,x)\leq Y_{t}^{t,x^{\prime},\bar{u}}-Y_{t}^{t,x,\bar{u}}.
\label{ee3-30}%
\end{equation}
For any given $a\in D_{x}^{1,-}V(t,x)$, by definition of $D_{x}^{1,-}V(t,x)$,
we get%
\begin{equation}
a(x^{\prime}-x)+o(|x^{\prime}-x|)\leq V(t,x^{\prime})-V(t,x)\text{.}
\label{ee3-31}%
\end{equation}
By (\ref{ee3-28})-(\ref{ee3-31}), we deduce $a\leq \bar{p}_{t}$ if $x^{\prime
}>x$ and $a\geq \tilde{p}_{t}$ if $x^{\prime}<x$. Thus $a\in \lbrack \tilde
{p}_{t},\bar{p}_{t}]$, which implies $D_{x}^{1,-}V(t,x)\subseteq \lbrack
\tilde{p}_{t},\bar{p}_{t}]$. If $\tilde{p}_{t}=\bar{p}_{t}$, by (\ref{ee3-28}%
)-(\ref{ee3-30}) and the definition of $D_{x}^{1,+}V(t,x)$, we have $\bar
{p}_{t}\in D_{x}^{1,+}V(t,x)$.
\end{proof}

The first example shows that $D_{x}^{1,+}V(t,x)$ may be empty if $\bar{p}%
_{t}\not =\tilde{p}_{t}$.

\begin{example}
\label{exa1}Consider the following simple control system:%
\[
\left \{
\begin{array}
[c]{l}%
dX_{s}^{t,x,u}=u_{s}d\langle B\rangle_{s},\text{ }X_{t}^{t,x,u}=x,\text{ }%
s\in \lbrack t,T],\\
dY_{s}^{t,x,u}=Z_{s}^{t,x,u}dB_{s}+dK_{s}^{t,x,u},\text{ }Y_{T}^{t,x,u}%
=(X_{T}^{t,x,u})^{2},
\end{array}
\right.
\]
where $U=\{1\}$, $\bar{\sigma}^{2}=1$ and $\underline{\sigma}^{2}=0.2$.
Obviously, $\bar{u}=1$ and the value function%
\[
V(t,x)=\mathbb{\hat{E}}\left[  (x+\langle B\rangle_{T}-\langle B\rangle
_{t})^{2}\right]  ,\text{ }(t,x)\in \lbrack0,T]\times \mathbb{R}.
\]
By Theorem 3.5.4 in \cite{P2019}, we have%
\begin{equation}
\mathbb{\hat{E}}\left[  (x+\langle B\rangle_{T}-\langle B\rangle_{t}%
)^{2}\right]  =\sup_{\underline{\sigma}^{2}\leq v\leq \bar{\sigma}^{2}%
}(x+v(T-t))^{2}. \label{ee3-32}%
\end{equation}
Thus we obtain%
\[
V(t,x)=\left \{
\begin{array}
[c]{ll}%
(x+(T-t))^{2}, & x\geq-0.6(T-t),\\
(x+0.2(T-t))^{2}, & x<-0.6(T-t).
\end{array}
\right.
\]
On the point $(t^{\ast},x^{\ast})=(t^{\ast},-0.6(T-t^{\ast}))$ for some
$t^{\ast}<T$, it is easy to verify that the maximum value in (\ref{ee3-32}) is
obtained at $v=\bar{\sigma}^{2}$ or $v=\underline{\sigma}^{2}$. Thus
$\{P^{\upsilon}:v=\bar{\sigma}$ or $v=\underline{\sigma}\} \subseteq
\mathcal{P}_{t^{\ast},x^{\ast}}^{\ast}$, where $P^{\upsilon}$ is a probability
measure on $(\Omega_{T},\mathcal{B}(\Omega_{T}))$ such that $\langle
B\rangle_{s}=v^{2}s$ $P^{v}$-a.s. Under this case,%
\begin{align*}
p_{t^{\ast}}^{P^{\bar{\sigma}}}  &  =2E_{P^{\bar{\sigma}}}[x^{\ast}+\langle
B\rangle_{T}-\langle B\rangle_{t^{\ast}}]=0.8(T-t^{\ast}),\\
p_{t^{\ast}}^{P^{\underline{\sigma}}}  &  =2E_{P^{\underline{\sigma}}}%
[x^{\ast}+\langle B\rangle_{T}-\langle B\rangle_{t^{\ast}}]=-0.8(T-t^{\ast}),
\end{align*}
which implies $\bar{p}_{t^{\ast}}\not =\tilde{p}_{t^{\ast}}$. It is easy to
check that%
\[
\lim_{x\downarrow x^{\ast}}\frac{V(t^{\ast},x)-V(t^{\ast},x^{\ast})}%
{x-x^{\ast}}=0.8(T-t^{\ast}),\text{ }\lim_{x\uparrow x^{\ast}}\frac{V(t^{\ast
},x)-V(t^{\ast},x^{\ast})}{x-x^{\ast}}=-0.8(T-t^{\ast}),
\]
which implies $D_{x}^{1,+}V(t^{\ast},x^{\ast})=\emptyset$ by the definition of
$D_{x}^{1,+}V(t^{\ast},x^{\ast})$.
\end{example}

The second example is a linear quadratic optimal control problem under
$\mathbb{\hat{E}}[\cdot]$ studied in \cite{HJ0}.

\begin{example}
Consider the following control system:%
\[
\left \{
\begin{array}
[c]{rl}%
dX_{s}^{t,x,u}= & (4X_{s}^{t,x,u}+u_{s})d\langle B\rangle_{s}+(X_{s}%
^{t,x,u}+u_{s})dB_{s},\\
dY_{s}^{t,x,u}= & -\frac{1}{2}[(X_{s}^{t,x,u})^{2}+(u_{s})^{2}]d\langle
B\rangle_{s}+Z_{s}^{t,x,u}dB_{s}+dK_{s}^{t,x,u},\\
X_{t}^{t,x,u}= & x,\text{ }Y_{T}^{t,x,u}=\frac{1}{2}(X_{T}^{t,x,u})^{2},\text{
}s\in \lbrack t,T],
\end{array}
\right.
\]
where $x\not =0$, $U=\mathbb{R}$, $\bar{\sigma}^{2}=1$ and $\underline{\sigma
}^{2}=0.2$. Let $\bar{u}(\cdot)\in \mathcal{U}^{t}[t,T]$ be an optimal control.
By Theorem \ref{MP}, there exists a $P_{t,x}^{\ast}\in \mathcal{P}_{t,x}^{\ast
}$ such that $E_{P_{t,x}^{\ast}}\left[  K_{T}^{t,x,\bar{u}}\right]  =0$ and%
\begin{equation}
p_{s}+q_{s}+\bar{u}_{s}=0,\text{ a.e. }s\in \lbrack t,T],\text{ }P_{t,x}^{\ast
}\text{-a.s.,} \label{ee3-33}%
\end{equation}
where $(p(\cdot),q(\cdot),N(\cdot))$ satisfies the following BSDE under
$P_{t,x}^{\ast}$:%
\[
\left \{
\begin{array}
[c]{rl}%
dp_{s}= & -[4p_{s}+q_{s}+X_{s}^{t,x,\bar{u}}]d\langle B\rangle_{s}+q_{s}%
dB_{s}+dN_{s},\\
p_{T}= & X_{T}^{t,x,\bar{u}},\text{ }s\in \lbrack t,T].
\end{array}
\right.
\]
Suppose that $p_{s}=P_{s}X_{s}^{t,x,\bar{u}}$, $P_{t,x}^{\ast}$-a.s. and%
\[
dP_{s}=\Lambda_{s}d\langle B\rangle_{s},\text{ }P_{T}=1,\text{ }P_{t,x}^{\ast
}\text{-a.s.}%
\]
Applying Ito's formula to $P_{s}X_{s}^{t,x,\bar{u}}$ under $P_{t,x}^{\ast}$,
we get
\begin{equation}
\left \{
\begin{array}
[c]{rl}%
\Lambda_{s}X_{s}^{t,x,\bar{u}}= & -9P_{s}X_{s}^{t,x,\bar{u}}-X_{s}%
^{t,x,\bar{u}}-2P_{s}\bar{u}_{s},\\
q_{s}= & P_{s}X_{s}^{t,x,\bar{u}}+P_{s}\bar{u}_{s},\text{ }N_{s}=0.
\end{array}
\right.  \label{ee3-34}%
\end{equation}
By (\ref{ee3-33}) and (\ref{ee3-34}), we obtain%
\[
\bar{u}_{s}=-\frac{2P_{s}}{1+P_{s}}X_{s}^{t,x,\bar{u}},\text{ }\Lambda
_{s}=-\frac{5P_{s}^{2}+10P_{s}+1}{1+P_{s}}.
\]
Suppose that $d\langle B\rangle_{s}=\bar{\sigma}^{2}ds=ds$ under
$P_{t,x}^{\ast}$. Then we obtain%
\begin{equation}
dP_{s}=-\frac{5P_{s}^{2}+10P_{s}+1}{1+P_{s}}ds,\text{ }P_{T}=1, \label{ee3-35}%
\end{equation}
which implies%
\[
d(1+P_{s})^{2}=[-10(1+P_{s})^{2}+8]ds,\text{ }P_{T}=1.
\]
The solution of (\ref{ee3-35}) is%
\begin{equation}
P_{s}=\sqrt{\frac{16}{5}e^{10(T-s)}+\frac{4}{5}}-1\text{, }s\in \lbrack0,T].
\label{ee3-36}%
\end{equation}

In the following, we prove that $d\langle B\rangle_{s}=ds$ under
$P_{t,x}^{\ast}$. Let $(P_{s})_{s\in \lbrack0,T]}$ be the solution of
(\ref{ee3-35}) and
\begin{equation}
\bar{u}_{s}=-\frac{2P_{s}}{1+P_{s}}X_{s}^{t,x,\bar{u}}. \label{ee3-37}%
\end{equation}
Applying Ito's formula to $\frac{1}{2}P_{s}(X_{s}^{t,x,\bar{u}})^{2}$, it is
easy to verify that%
\[%
\begin{array}
[c]{l}%
Y_{s}^{t,x,\bar{u}}=\frac{1}{2}P_{s}(X_{s}^{t,x,\bar{u}})^{2},\text{ }%
Z_{s}^{t,x,\bar{u}}=P_{s}(X_{s}^{t,x,\bar{u}})^{2}+P_{s}X_{s}^{t,x,\bar{u}%
}\bar{u}_{s},\\
K_{s}^{t,x,\bar{u}}=\int_{t}^{s}\frac{5P_{r}^{2}+10P_{r}+1}{2(1+P_{r})}%
(X_{r}^{t,x,\bar{u}})^{2}[d\langle B\rangle_{r}-dr].
\end{array}
\]
Since $P_{r}>0$ and $(X_{r}^{t,x,\bar{u}})^{2}>0$, we obtain that
$\mathcal{P}_{t,x}^{\ast}$ contains only one element $P_{t,x}^{\ast}$ such
that $d\langle B\rangle_{r}=dr$. By Theorem 5.4 in \cite{HJ0}, $\bar{u}$
defined in (\ref{ee3-37}) is an optimal control.

By Theorem \ref{th-DPP}, the value function $V(\cdot)$ satisfies the following
HJB equation:%
\[
\left \{
\begin{array}
[c]{l}%
\partial_{t}V(t,x)+\underset{v\in \mathbb{R}}{\inf}G((x+v)^{2}\partial_{xx}%
^{2}V(t,x)+2(4x+v)\partial_{x}V(t,x)+x^{2}+v^{2})=0,\\
V(T,x)=\frac{1}{2}x^{2}.
\end{array}
\right.
\]
It is easy to verify that $V(t,x)=\frac{1}{2}P_{t}x^{2}$ is the unique
solution of the above HJB equation, where $(P_{s})_{s\in \lbrack0,T]}$ is the
solution of (\ref{ee3-35}). Thus one can easily check that the relations
(\ref{ee3-1}) and (\ref{ee3-3}) in Theorem \ref{MP-DPP-1} hold.
\end{example}

The third example shows that the relation (\ref{ee3-1}) in Theorem
\ref{MP-DPP-1} holds only for $P_{t,x}^{\ast}\in \mathcal{\tilde{P}}%
_{t,x}^{\ast}$ defined in (\ref{eee3-1}).

\begin{example}
\label{exa3}Consider the following control system:%
\[
\left \{
\begin{array}
[c]{rl}%
dX_{s}^{t,x,u}= & u_{s}d\langle B\rangle_{s},\\
dY_{s}^{t,x,u}= & Z_{s}^{t,x,u}dB_{s}+dK_{s}^{t,x,u},\\
X_{t}^{t,x,u}= & x,\text{ }Y_{T}^{t,x,u}=-(X_{T}^{t,x,u}-1)^{2},\text{ }%
s\in \lbrack t,T],
\end{array}
\right.
\]
where $T>1$, $U=[1,2]$, $\bar{\sigma}^{2}=1$ and $\underline{\sigma}^{2}=0.5$.
Under this case, the value function%
\begin{equation}
V(t,x)=\inf_{u\in \mathcal{U}^{t}[t,T]}Y_{t}^{t,x,u}=\inf_{u\in \mathcal{U}%
^{t}[t,T]}\mathbb{\hat{E}}\left[  -\left(  x+\int_{t}^{T}u_{s}d\langle
B\rangle_{s}-1\right)  ^{2}\right]  . \label{ee3-38}%
\end{equation}
By Theorem \ref{th-DPP}, the value function $V(\cdot)$ satisfies the following
HJB equation:%
\[
\partial_{t}V(t,x)+\underset{v\in U}{\inf}G(2v\partial_{x}V(t,x))=0,\text{
}V(T,x)=-(x-1)^{2}.
\]
It is easy to check that
\begin{equation}
V(t,x)=-(x+T-t-1)^{2} \label{ee3-39}%
\end{equation}
satisfies the above HJB equation.

Let $(t,x)=(T-1,0)$ be fixed. From (\ref{ee3-38}) and (\ref{ee3-39}), we
obtain that $\bar{u}_{s}=v$, $s\in \lbrack t,T]$, is an optimal control for any
fixed $v\in U$. Take $\bar{u}_{s}=v$ with $v\in(1,2)$, it is easy to check
that%
\[%
\begin{array}
[c]{l}%
Y_{s}^{t,x,\bar{u}}=\phi(s,v(\langle B\rangle_{s}-\langle B\rangle
_{t})-1),\text{ }Z_{s}^{t,x,\bar{u}}=0,\\
K_{s}^{t,x,\bar{u}}=\int_{t}^{s}\partial_{x}\phi(r,v(\langle B\rangle
_{r}-\langle B\rangle_{t})-1)vd\langle B\rangle_{r}+\int_{t}^{s}\partial
_{r}\phi(r,v(\langle B\rangle_{r}-\langle B\rangle_{t})-1)dr,
\end{array}
\]
where%
\[
\phi(s,x^{\prime})=\left \{
\begin{array}
[c]{ll}%
-(x^{\prime}+\frac{v}{2}(T-s))^{2}, & x^{\prime}>-\frac{v}{2}(T-s),\\
0, & x^{\prime}\in \lbrack-v(T-s),-\frac{v}{2}(T-s)],\\
-(x^{\prime}+v(T-s))^{2}, & x^{\prime}<-v(T-s).
\end{array}
\right.
\]
For each $P_{t,x}^{\ast}\in \mathcal{P}_{t,x}^{\ast}$ defined in (\ref{e2-3-1}%
), we know that for, $s\in \lbrack t,T]$,%
\[
\int_{t}^{s}[\partial_{x}\phi(r,v\int_{t}^{r}\gamma_{\theta}d\theta
-1)v\gamma_{r}+\partial_{r}\phi(r,v\int_{t}^{r}\gamma_{\theta}d\theta
-1)]dr=0,\text{ }P_{t,x}^{\ast}\text{-a.s.,}%
\]
where $d\langle B\rangle_{r}=\gamma_{r}dr$. Then we obtain%
\[
\partial_{x}\phi(s,v\int_{t}^{s}\gamma_{\theta}d\theta-1)v\gamma_{s}%
+\partial_{s}\phi(s,v\int_{t}^{s}\gamma_{\theta}d\theta-1)=0,\text{ a.e. }%
s\in \lbrack t,T],\text{ }P_{t,x}^{\ast}\text{-a.s.}%
\]
By simple calculation of $\partial_{x}\phi$ and $\partial_{s}\phi$, we can
easily obtain that there exists a $\Omega_{0}\subset \Omega$ with
$P_{t,x}^{\ast}(\Omega_{0})=1$ such that, for $\omega \in \Omega_{0}$, a.e.
$s\in \lbrack t,T]$,%
\begin{equation}
\gamma_{s}(\omega)=\left \{
\begin{array}
[c]{ll}%
0.5, & v\int_{t}^{s}\gamma_{\theta}(\omega)d\theta-1>-\frac{v}{2}(T-s),\\
1, & v\int_{t}^{s}\gamma_{\theta}(\omega)d\theta-1<-v(T-s).
\end{array}
\right.  \label{ee3-40}%
\end{equation}
If $v\int_{t}^{s_{0}}\gamma_{\theta}(\omega)d\theta-1>-\frac{v}{2}(T-s_{0})$
for some $s_{0}\in(t,T]$, then $s^{\ast}:=\sup \{s<s_{0}:v\int_{t}^{s}%
\gamma_{\theta}(\omega)d\theta-1=-\frac{v}{2}(T-s)\}$ such that%
\[%
\begin{array}
[c]{l}%
v\int_{t}^{s^{\ast}}\gamma_{\theta}(\omega)d\theta-1=-\frac{v}{2}(T-s^{\ast
}),\\
v\int_{t}^{s}\gamma_{\theta}(\omega)d\theta-1>-\frac{v}{2}(T-s)\text{ for
}s\in(s^{\ast},s_{0}].
\end{array}
\]
By (\ref{ee3-40}), we get $\gamma_{s}(\omega)=0.5$ for $s\in(s^{\ast},s_{0}]$
a.e. From this, we have
\begin{align*}
v\int_{t}^{s}\gamma_{\theta}(\omega)d\theta-1  &  =v\int_{t}^{s^{\ast}}%
\gamma_{\theta}(\omega)d\theta-1+\frac{v}{2}(s-s^{\ast})\\
&  =-\frac{v}{2}(T-s^{\ast})+\frac{v}{2}(s-s^{\ast})\\
&  =-\frac{v}{2}(T-s),\text{ for }s\in(s^{\ast},s_{0}],
\end{align*}
which is a contradiction. Thus we obtain $v\int_{t}^{s}\gamma_{\theta}%
d\theta-1\leq-\frac{v}{2}(T-s)$ for $s\in \lbrack t,T]$. Similarly, we can get
$v\int_{t}^{s}\gamma_{\theta}d\theta-1\geq-v(T-s)$ for $s\in \lbrack t,T]$.
Taking $s=T$, we obtain%
\[
v\int_{t}^{T}\gamma_{\theta}d\theta-1=0,\text{ }P_{t,x}^{\ast}\text{-a.s.}%
\]
From this we get%
\[
v\int_{t}^{s}\gamma_{\theta}d\theta-1=-v\int_{s}^{T}\gamma_{\theta}d\theta
\in \lbrack-v(T-s),-\frac{v}{2}(T-s)].
\]
Thus%
\begin{equation}
\mathcal{P}_{t,x}^{\ast}=\{P\in \mathcal{P}:\langle B\rangle_{T}-\langle
B\rangle_{t}=v^{-1},\text{ }P\text{-a.s.}\}. \label{ee3-41}%
\end{equation}
For each $P_{t,x}^{\ast}\in \mathcal{P}_{t,x}^{\ast}$, it is clear that
$Y_{s}^{t,x,\bar{u}}=0$. But $V(s,X_{s}^{t,x,\bar{u}})=-\left(  \int_{t}%
^{s}(v\gamma_{\theta}-1)d\theta \right)  ^{2}$ may not be equal to $0$. Thus
the relational expression (\ref{ee3-1}) in Theorem \ref{MP-DPP-1} does not
hold for each $P_{t,x}^{\ast}\in \mathcal{P}_{t,x}^{\ast}$. Similar to the
analysis of (\ref{ee3-40}), we can obtain that $\mathcal{\tilde{P}}%
_{t,x}^{\ast}$ contains only one element $P_{t,x}^{\ast}$ such that
$v\gamma_{s}=1$, a.e. $s\in \lbrack t,T],$ $P_{t,x}^{\ast}$-a.s. Thus the
relational expression (\ref{ee3-1}) in Theorem \ref{MP-DPP-1} holds for
$P_{t,x}^{\ast}\in \mathcal{\tilde{P}}_{t,x}^{\ast}$. It is easy to verify that
the relational expressions (\ref{eee3-4}) and (\ref{ee3-3}) in Theorem
\ref{MP-DPP-1} hold for $P_{t,x}^{\ast}\in \mathcal{\tilde{P}}_{t,x}^{\ast}$.
\end{example}

\end{document}